\newtheorem{theorem}{Theorem}[section]
\newtheorem{lemma}{Lemma}[section]
\newtheorem{proposition}{Proposition}[section]
\newtheorem{definition}{Definition}[section]
\newtheorem{remark}{Remark}[section]
\newtheorem{example}{Example}[section]
\begin{document}
\title{\bf Linear Time-Varying Dynamic-Algebraic Equations of Index One on Time Scales}

\author{Svetlin G. Georgiev\footnote{Sorbonne University, Department of Mathematics, Paris, France} and Sergey Kryzhevich\footnote{Gda\'nsk University of Technology, Faculty of Applied Physics and Mathematics, Gda\'nsk, Poland, serkryzh@pg.edu.pl}}
\maketitle
\begin{abstract}
In this paper, we introduce a class of linear time-varying dynamic-algebraic equations(LTVDAE) of tractability index one on arbitrary time scales. We propose a procedure for the decoupling of the considered class LTVDAE.  A projector approach is used to prove the main statement of the paper.
\end{abstract}

\noindent{\textbf Keywords:} decoupling, time scales, dynamic-algebraic equations, linear systems, projectors.

\section{Introduction}
The time scale dynamics, first introduced by Aulbach and Hilger \cite{{AH90}}, became an important part of the modern theory of dynamical systems. They appear in biological systems, mechanical systems, describing dynamics with impulse interactions, sociological models, etc. Roughly speaking, those systems combine features of ordinary differential equations and discrete dynamical systems. A brief survey on time scale calculus and time scale dynamics is performed in the book \cite{bohner1}, see also \cite{BG16}, \cite{D05}, \cite{M16}, references therein, and Section 2 of this paper.

In this paper, we study the problem of decoupling linear systems, investigating the LTVDAE
\begin{equation}
\label{1} A^{\sigma}(t)(Bx)^{\Delta}(t)=C^{\sigma}(t) x^{\sigma}(t)+f(t),\quad t\in I,
\end{equation}
where $A: I\to \mathcal{M}_{n\times m}$, $B: I\to \mathcal{M}_{m\times n}$, $C: I\to \mathcal{M}_{n\times n}$,  $f: I\to \mathbb{R}^n$ are given, $x: I\to \mathbb{R}^n$ is unknown,  $I\subseteq \mathbb{T}$, $\mathbb{T}$ is a time scale with forward jump operator and delta differentiation operator $\sigma$ and $\Delta$, respectively. Here, with $\mathcal{M}_{p\times q}$ we denote the space of $C^1$ -- smooth $p\times q$ matrices with real entries.  More precisely, we give conditions for $A$, $B$, $C$, and $f$ under which we construct projectors and matrix chains ensuring decoupling of the LTVDAE \eqref{1}. To our knowledge, no investigations were performed devoted to LTVDAE  on arbitrary time scales.

The paper is organized as follows.  In the next section, we give some basic facts from time scale calculus. In Section 3, we introduce $P$-projectors and matrix chains and we deduct some of their properties. In Section 4, we consider a particular case. In Section 5, we investigate standard form index one problems. In Section 6, we give a procedure for decoupling of the equation \eqref{1} and provide the main result of our paper. In Section 7, we give some examples, illustrating the obtained results. A conclusion is made in Section 8.

\section{Time Scales Essentials}

In this paper, a time scale denoted by the symbol $\mathbb T$, is any closed non-empty subset of $\mathbb R$.

We suppose that a time scale $\mathbb T$ has the topology that inherits from the real numbers with the standard topology. 

Also, in this section only, we assume for simplicity that either $\sup {\mathbb T}=+\infty$ or $\sup {\mathbb T}$ is not an isolated point.

We set $\inf\emptyset=\sup{\mathbb T}$ and $\sup\emptyset=\inf{\mathbb T}$.

\begin{definition}
For $t \in {\mathbb T}$ we define the forward jump operator $\sigma : {\mathbb T} \mapsto {\mathbb T}$ as follows
$\sigma(t) = \inf\{s \in {\mathbb T} : s > t\}$. We note that $\sigma(t) \ge t$ for any $t \in {\mathbb T}.$
\end{definition}
\begin{definition}
We define the graininess function by the formula $\mu(t)=\sigma(t)-t$. The point is called \emph{right-dense} if $\mu(t)=0$ and right-scattered otherwise. Left-dense and left-scattered points are defined similarly.
\end{definition}
\begin{definition} If $f : {\mathbb T} \mapsto {\mathbb R}$ is a function, then we define
$f^\sigma : {\mathbb T} \mapsto {\mathbb R}$ by
$f^{\sigma}(t)=f(\sigma(t))$ for any $t \in {\mathbb T}$, i.e., $f^\sigma =f\circ \sigma$.
\end{definition}

\begin{definition}
We say that a function is rd-continuous if it is continuous in all right-dense points and there exists a left limit at left-dense points (that may not coincide with the value of the function at the point).
\end{definition}

Here is the definition of a segment of a time scale.

\begin{definition}
Let $a,b\in {\mathbb T}$, $a\le b$. We define the interval $[a,b]$ in ${\mathbb T}$
by $$[a, b] = \{t \in {\mathbb T} : a \le t \le b\}.$$
\end{definition}

Open intervals, half-open intervals, half-lines and  so on are defined accordingly.

\begin{definition}
Assume that $f : {\mathbb T} \mapsto {\mathbb R}$ is a rd-continuous function and let $t \in {\mathbb T}$. We define $f^\Delta(t)$ to be the number, provided it exists, as follows: for any $\varepsilon > 0$ there is a neighborhood $U$ of $t$, $U=(t-\delta,t+\delta)\bigcap {\mathbb T}$ for some
$\delta > 0$, such that
$$|f(\sigma(t))-f(s)-f^\Delta(t)(\sigma(t)-s)|\le \varepsilon|\sigma(t)-s|\quad \mbox{for all}\quad s\in U, s \neq \sigma(t).$$
We say that $f^\Delta(t)$ is the delta or Hilger derivative of $f$ at $t$.

We say that $f$ is delta or Hilger differentiable, shortly differentiable, in $\mathbb T$ if $f^\Delta (t)$ exists for all $t \in {\mathbb T}$. The function $f^\Delta : {\mathbb T} \mapsto {\mathbb R}$ is said to be delta derivative or Hilger derivative, shortly derivative, of $f$ in
$\mathbb T$.
\end{definition}

If ${\mathbb T}={\mathbb R}$, then the delta derivative coincides with the classical derivative, if ${\mathbb T} = {\mathbb N}$, this is just the increment $f(n+1)-f(n)$.

We list some basic properties of Hilger derivatives.

\begin{theorem} Let $f :{\mathbb T} \mapsto R$ is a function and let $t\in {\mathbb T}$. Then the
following holds.
\begin{enumerate}
\item If $f$ is differentiable at $t$, then $f$ is continuous at $t$.
\item If $f$ is rd-continuous and $t$ is right-scattered, then $f$ is differentiable at $t$ with
$$f^\Delta(t)= \dfrac{f(\sigma(t))-f(t)}{\mu(t)}.$$
\item If $t$ is right-dense, then $f$ is differentiable if and only if the limit
$$f^\Delta(t):=\lim_{s\to t} \dfrac{f(t)-f(s)}{t-s}$$
exists and is finite.
\item If $f$ is differentiable at $t$, then
$f (\sigma (t)) = f (t) + \mu(t)f^\Delta (t)$.
\end{enumerate}
\end{theorem}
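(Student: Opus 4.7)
My plan is to handle the four assertions in an order that avoids circularity: first the essentially tautological item (3), then (2), then (4) as an immediate corollary of (2), and finally the most delicate part (1). Item (3) is direct: when $t$ is right-dense, $\sigma(t)=t$ and $\mu(t)=0$, so the Hilger inequality of the last definition in the excerpt collapses to the classical $\varepsilon$-$\delta$ condition that $(f(t)-f(s))/(t-s)\to f^{\Delta}(t)$ as $s\to t$ in $\mathbb{T}$.

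For item (2) the key observation is that $t$ being right-scattered makes $t\neq\sigma(t)$, so the value $s=t$ is admissible in the defining inequality. Plugging it in yields $|f(\sigma(t))-f(t)-f^{\Delta}(t)\mu(t)|\le\varepsilon\mu(t)$ for every $\varepsilon>0$, forcing $f^{\Delta}(t)=(f(\sigma(t))-f(t))/\mu(t)$. To verify that this candidate $L$ actually satisfies the definition, I would choose $\delta<\mu(t)$, so that no $\mathbb{T}$-point lies strictly between $t$ and $t+\delta$, making the neighbourhood $U=(t-\delta,t+\delta)\cap\mathbb{T}$ sit inside $(-\infty,t]$. For $s\in U$ with $s<t$ the algebraic reduction
\[
|f(\sigma(t))-f(s)-L(\sigma(t)-s)|=|f(t)-f(s)-L(t-s)|,
\]
which uses $f(\sigma(t))-f(t)=L\mu(t)$, transfers the required bound to an estimate for $|f(t)-f(s)-L(t-s)|$, and the latter is controlled by the (left) continuity supplied by rd-continuity at $t$ together with the boundedness of $L$. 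Item (4) is then the identity $f(\sigma(t))=f(t)+\mu(t)f^{\Delta}(t)$: vacuous if $t$ is right-dense, and a rewriting of the formula of (2) if $t$ is right-scattered.

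Item (1) is the most delicate. In the right-dense case it is the classical argument. In the right-scattered case I would combine two instances of the Hilger inequality---at a generic $s\in U$ and at $s=t$---and subtract, obtaining after one line of rearrangement
\[
|f(t)-f(s)|\le|f^{\Delta}(t)|\,|t-s|+\varepsilon\bigl(|\sigma(t)-s|+\mu(t)\bigr).
\]
The main obstacle lies precisely here: the term $\varepsilon\mu(t)$ does not shrink as $s\to t$, so a single $\varepsilon$-$\delta$ pair from the definition does not suffice. The remedy is to choose $\varepsilon$ \emph{first}, small enough that $\varepsilon\mu(t)$ falls under a prescribed tolerance, and only then to shrink $|t-s|$ using the matching $\delta$, further restricted to tame $|f^{\Delta}(t)|\,|t-s|$. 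With this two-stage choice one obtains $|f(t)-f(s)|\to 0$ as $s\to t^{-}$, which, since $t$ is isolated from the right in $\mathbb{T}$, is exactly continuity at $t$.
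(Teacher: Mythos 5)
The paper itself gives no proof of this theorem; it is quoted as standard background from the time-scale literature (essentially Bohner--Peterson, Theorem~1.16), so the only thing to compare your argument against is the standard one --- which yours essentially is. Items (1), (3) and (4) are handled correctly: your reading of (3) as the collapse of the Hilger inequality when $\sigma(t)=t$ is right, your reduction of (4) to the computation at $s=t$ uses only differentiability (as it must), and in (1) the two-stage choice --- fix $\varepsilon$ first so that $\varepsilon\mu(t)$ is below the target tolerance, then shrink $|t-s|$ --- is exactly the device the standard proof uses (there via an auxiliary $\varepsilon^{*}=\varepsilon\bigl(1+|f^{\Delta}(t)|+2\mu(t)\bigr)^{-1}$); you correctly identified that a single $\varepsilon$--$\delta$ pair cannot work because of the non-vanishing term $\varepsilon\mu(t)$.

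There is, however, one genuine gap, in item (2). You close the verification that $L=(f(\sigma(t))-f(t))/\mu(t)$ satisfies the definition by appealing to ``the (left) continuity supplied by rd-continuity at $t$''. By the paper's own definition, rd-continuity at a left-dense point guarantees only the \emph{existence} of the left limit, which is explicitly allowed to differ from $f(t)$; it does not give left continuity. At a point $t$ that is right-scattered and left-dense this matters: if $\lim_{s\to t^{-}}f(s)=c\neq f(t)$, then in your identity $|f(\sigma(t))-f(s)-L(\sigma(t)-s)|=|f(t)-f(s)-L(t-s)|$ the right-hand side tends to $|f(t)-c|>0$ as $s\to t^{-}$, while the required bound $\varepsilon|\sigma(t)-s|$ tends to $\varepsilon\mu(t)$; the inequality fails once $\varepsilon<|f(t)-c|/\mu(t)$, and since $L(t-s)\to 0$ for \emph{any} candidate $L$, the function is in fact not differentiable at such a $t$ at all. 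So the statement as printed (with ``rd-continuous'') is false at right-scattered, left-dense points, and your proof papers over exactly this. The hypothesis under which your argument is complete is that $f$ be \emph{continuous at} $t$ (automatic when $t$ is also left-scattered); you should either strengthen the hypothesis accordingly or note the discrepancy with the paper's definition of rd-continuity.
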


We list some basic properties of the Hilger derivatives.

\begin{theorem} Assume that $f,g:{\mathbb T}\mapsto {\mathbb R}$ are differentiable at $t\in {\mathbb T}$. Then
\begin{enumerate}
\item the sum $f+g$ is differentiable at $t$ with
$$(f+g)^\Delta(t)=f^\Delta(t)+g^\Delta(t).$$
\item for any constant $\alpha$, the function $\alpha f$ is differentiable at $t$ with $$(\alpha f )^\Delta(t) = \alpha f^\Delta(t).$$
\item if $g(t),g(\sigma(t))\neq 0$ then $f/g$ is differentiable at $t$ with
$$(f/g)^\Delta (t)= \dfrac{f^\Delta(t)g(t)-f(t)g^\Delta(t)}{g(t)g(\sigma(t))}.$$
\item the product $fg$ is differentiable at $t$ with
$$(fg)^\Delta(t) = f^\Delta(t)g(t)+f(\sigma(t))g^\Delta(t)
= f(t)g^\Delta(t)+f^\Delta(t)g(\sigma(t)).$$
\end{enumerate}
\end{theorem}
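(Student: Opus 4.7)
The plan is to prove each of the four differentiation rules by reducing to the two mutually exclusive cases distinguished in the previous theorem: either $t$ is right-scattered, in which case the Hilger derivative of any rd-continuous function reduces to the finite difference quotient $(h(\sigma(t))-h(t))/\mu(t)$, or $t$ is right-dense, in which case it equals the ordinary limit $\lim_{s\to t}(h(t)-h(s))/(t-s)$. Once this dichotomy is in place, each rule follows by the algebraic identity familiar from classical calculus applied in the appropriate form.

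For parts (1) and (2), I would just compute directly. In the right-scattered case, both formulas are immediate from linearity of the difference quotient, while in the right-dense case they follow from linearity of ordinary limits. No cross-terms arise, so there is nothing subtle beyond reading off the definition.

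For the product rule (4), the key algebraic step is the telescoping decomposition
$$f(\sigma(t))g(\sigma(t))-f(s)g(s)=f(\sigma(t))\bigl(g(\sigma(t))-g(s)\bigr)+g(s)\bigl(f(\sigma(t))-f(s)\bigr).$$
Dividing by $\sigma(t)-s$ and working inside the $\varepsilon$-neighborhood characterization of $f^\Delta(t)$ and $g^\Delta(t)$, one obtains the first asymmetric form $f(\sigma(t))g^\Delta(t)+f^\Delta(t)g(t)$ upon letting $s\to t$; here continuity of $g$ at $t$, guaranteed by its differentiability, is what turns $g(s)$ into $g(t)$ in the limit (in the right-scattered case, $s=t$ is allowed directly since $\sigma(t)\neq t$). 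The symmetric form comes from the mirror-image split $g(\sigma(t))(f(\sigma(t))-f(s))+f(s)(g(\sigma(t))-g(s))$. For the quotient rule (3), I would first establish $(1/g)^\Delta(t)=-g^\Delta(t)/(g(t)g(\sigma(t)))$ from the identity $1/g(\sigma(t))-1/g(s)=(g(s)-g(\sigma(t)))/(g(s)g(\sigma(t)))$, using continuity of $g$ together with $g(t)\neq 0$ to guarantee that $g(s)\neq 0$ on a neighborhood of $t$; then $(f/g)^\Delta(t)$ is handled by applying the product rule to $f\cdot(1/g)$ and simplifying.

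The main obstacle is purely bookkeeping: one must be careful about which factor is evaluated at $\sigma(t)$ and which at $s$ when performing the telescoping, since this choice dictates whether the product rule appears in its first or second asymmetric form, and a wrong pairing would leave a residual term that does not vanish as $s\to t$. Once the telescoping is set up consistently with the definition of $f^\Delta(t)$ (which controls $f(\sigma(t))-f(s)$ rather than $f(t)-f(s)$), the remaining estimates are routine and uniform in the two cases.
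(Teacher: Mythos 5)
The paper states this theorem without proof: it appears in the ``Time Scales Essentials'' section as standard background quoted from the time scale calculus literature (Bohner--Peterson), so there is no in-paper argument to compare against. Your proposal is the standard textbook proof and is correct: the dichotomy between right-scattered and right-dense points, the telescoping split that controls $f(\sigma(t))-f(s)$ (the quantity the definition of $f^\Delta(t)$ actually estimates) with continuity of the other factor converting $g(s)$ to $g(t)$ in the limit, and the derivation of the quotient rule from $(1/g)^\Delta(t)=-g^\Delta(t)/(g(t)g(\sigma(t)))$ via the product rule all match the reference treatment; the only point to watch, which you already flag, is that one must use the asymmetric form $f(t)h^\Delta(t)+f^\Delta(t)h(\sigma(t))$ with $h=1/g$ to land on the stated numerator $f^\Delta(t)g(t)-f(t)g^\Delta(t)$.
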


The integral calculus on time scales is also quite well-developed, see \cite{bohner1}. Now we introduce some basic concepts related to the theory of linear systems of the form
\begin{equation}\label{linear}
    x^\Delta=A(t)x
\end{equation}
on time scales.

Properties of solutions of system \eqref{linear} are also studied. In general, they correspond to those for ordinary differential equations with one important exception which can be illustrated by the following example.

\noindent\textbf{Example}. Let ${\mathbb T}={\mathbb N}$. Consider the scalar equation
$x^\Delta=-x$. Then any solution of the considered equation with initial conditions $x(n_0)=x_0$ is zero for any $n>n_0$, so there is no backward uniqueness of solutions.

\begin{definition}
We say that the matrix $A$ is regressive with respect to $\mathbb T$ provided $E+\mu(t)A(t)$ is invertible for all $t\in {\mathbb T}$.
\end{definition}

\begin{theorem}
\begin{enumerate}
    \item Any solution of system \eqref{linear} with initial conditions $x(t_0)=x_0$ is correctly defined for any $t\ge t_0$.
    \item Solutions of system \eqref{linear} are unique if the matrix is regressive.
\item The matrix-valued function $A$ is regressive if and only if the eigenvalues $\lambda_i(t)$ of $A(t)$ are regressive for all $1 \le  i \le  n$.
\end{enumerate}
\end{theorem}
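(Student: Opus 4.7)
The plan is to treat the three claims via the standard case analysis on whether $t$ is right-dense or right-scattered. For (1), I would build the solution inductively: at every right-scattered $t$, the $\Delta$-equation together with item~4 of the preceding theorem gives $x(\sigma(t))=(E+\mu(t)A(t))x(t)$, which determines $x$ at the successor with no invertibility requirement. On maximal portions made of right-dense points, the equation reduces to a classical linear ODE with a continuous coefficient matrix; Picard iteration together with the linear growth bound $\|x(t)\|\le\|x(t_0)\|\exp\int_{t_0}^{t}\|A(s)\|\,ds$ yields a solution there. Gluing the two kinds of pieces at their common endpoints using continuity of $x$ (item~1 of the preceding theorem) produces a solution defined for all $t\ge t_0$ in $\mathbb T$, and since the construction is deterministic, forward uniqueness is automatic.

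For (2), the substantive content is backward uniqueness, which the example $\mathbb T=\mathbb N$ shows can genuinely fail. Backward passage across a right-scattered step amounts to solving $x(\sigma(t))=(E+\mu(t)A(t))x(t)$ for $x(t)$, and this is uniquely solvable precisely when $E+\mu(t)A(t)$ is invertible, i.e.\ when $A$ is regressive. Backward uniqueness on right-dense portions is classical (Gr\"onwall applied in reverse time). Hence regressivity is exactly what upgrades the automatic forward uniqueness to bidirectional uniqueness.

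For (3), the eigenvalues of $E+\mu(t)A(t)$ are $1+\mu(t)\lambda_i(t)$, so $\det(E+\mu(t)A(t))=\prod_{i=1}^{n}(1+\mu(t)\lambda_i(t))$; this determinant is nonzero iff $1+\mu(t)\lambda_i(t)\ne 0$ for every $i$, which is by definition the regressivity of each eigenvalue function. The main obstacle, in my view, lies in the gluing step of (1): one has to verify that the construction passes cleanly through transitional points --- for example, where a sequence of right-scattered steps accumulates at a left-dense point, or where a right-dense arc terminates at a right-scattered instant --- and this requires a short compactness argument leaning on the above growth bound. Once the continuation issue is settled, parts (2) and (3) are essentially algebraic.
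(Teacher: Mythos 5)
First, a point of reference: the paper does not prove this theorem at all. It appears in Section~2 as part of a survey of standard facts from time scale calculus, with the reader implicitly sent to \cite{bohner1}. So there is no in-paper argument to compare against, and your proposal has to stand on its own.

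On its own terms, parts (2) and (3) are essentially right and complete: forward determinism plus the observation that crossing a right-scattered point \emph{backwards} means inverting $E+\mu(t)A(t)$ captures exactly why regressivity is the correct hypothesis, and the identity $\det(E+\mu(t)A(t))=\prod_{i=1}^{n}(1+\mu(t)\lambda_i(t))$ settles (3). The genuine gap is in (1), and it is the one you flag but do not close. A general time scale does not decompose into a well-ordered alternation of ``right-dense arcs'' and ``right-scattered steps'': a right-dense point need not have any interval of $\mathbb{T}$ to its right (take $\mathbb{T}\supseteq\{0\}\cup\{1/n:n\in\mathbb{N}\}$, where $0$ is right-dense but the points to its right are isolated and accumulate at $0$, so starting from $t_0=0$ there is no ``first step'' to take), and right-dense and right-scattered points can be interleaved in a Cantor-like fashion. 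Hence there is no induction over pieces to perform, and the maximal right-dense portions need not be intervals on which a classical ODE makes sense. The standard repair is to drop the case analysis for existence and work instead with the equivalent integral equation $x(t)=x_0+\int_{t_0}^{t}A(s)x(s)\,\Delta s$, running Picard iteration or a contraction argument directly with the $\Delta$-integral (this also gives forward uniqueness against \emph{arbitrary} solutions, which your ``the construction is deterministic'' remark does not quite deliver on the dense parts without a Gr\"onwall step); alternatively, your gluing can be formalized via the induction principle for time scales. Either route turns the sketch into a proof; as written, the continuation step is asserted rather than established.
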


An analog of the Lagrange method to solve linear non-homogeneous systems holds true for time scale systems, as well.

\section{$P$-Projectors. Matrix Chains}
Below, without loss of generality, we remove the dependence on $t$ for the sake of notational simplicity.
\subsection{\emph{Properly Stated Matrix Pairs}}
Suppose that $A: I\to \mathcal{M}_{n\times m}$ and  $B: I\to \mathcal{M}_{m\times n}$.
\begin{definition}
We will say that the matrix pair $(A, B)$ is properly stated on $I$ if $A$ and $B$ satisfy
\begin{equation}
\label{3.1} \text{ker}\,A \oplus \text{im}\, B=\mathbb{R}^m
\end{equation}
 and both $\text{ker}\,A$ and $\text{im}\,B$ are $\mathcal{C}^1$-spaces. The condition \eqref{3.1} is said to be the transversality condition.
\end{definition}
Let $(A, B)$ be a properly stated matrix pair on $I$.  Then $A$, $B$, and $AB$ have a constant rank on $I$ and
\begin{equation*}
\label{3.2} \text{ker}\,AB=\text{ker}\,B.
\end{equation*}
Indeed, \eqref{3.2} implies that if $y\in \text{im}\, B$ then $Ay\neq 0$.

Note that, by the transversality condition \eqref{3.1} together with the $\mathcal{C}^1$-smoothness of $\text{ker}\,A$ and $\text{im}\,B$, it follows that there is a projector $R\in \mathcal{C}^1(I)$ onto $\text{im}\,B$ and along $\text{ker}\,A$ on $I$.
\begin{theorem}
\label{theorem3.4} Let $N=\text{ker}\,A$ and  $\text{dim}\,N=m-r$. Then there exists a  smooth projector $Q$  onto $N$ on $I$ if and only if $N$ can be spanned by $\{n_1, \ldots, n_{m-r}\}$ on $I$, where $n_j\in \mathcal{C}^1(I)$, $j\in \{1, \ldots, m-r\}$.
\end{theorem}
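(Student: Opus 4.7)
The plan is to prove both implications of the equivalence. The reverse direction ($\Leftarrow$) is a direct construction by orthogonal projection, while the forward direction ($\Rightarrow$) is the delicate one, since the given projector contains the geometric information pointwise but one still has to extract a globally defined $\mathcal{C}^1$ frame on $I$.

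For ($\Leftarrow$), suppose $N$ is spanned on $I$ by $\mathcal{C}^1$ vectors $n_1,\ldots,n_{m-r}$. I would assemble them into the $m\times(m-r)$ matrix
$$M(t)=\bigl[\,n_1(t)\mid\cdots\mid n_{m-r}(t)\,\bigr],$$
which is $\mathcal{C}^1$ on $I$ and has full column rank at every $t$ (its columns span an $(m-r)$-dimensional space). Hence the Gram matrix $M^{T}M$ is $\mathcal{C}^1$ and invertible on $I$, and
$$Q(t) = M(t)\bigl(M(t)^{T}M(t)\bigr)^{-1}M(t)^{T}$$
is a $\mathcal{C}^1$ matrix. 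A direct verification yields $Q^{2}=Q$ and $\operatorname{im}Q(t)=\operatorname{im}M(t)=N(t)$, so $Q$ is the required smooth projector.

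For ($\Rightarrow$), assume a $\mathcal{C}^1$ projector $Q(t)$ onto $N(t)$ is given; then $\operatorname{rank}Q(t)=m-r$ is constant on $I$. My first step would be local: at a fixed $t_{0}\in I$ choose indices $i_{1},\ldots,i_{m-r}$ for which the columns $q_{i_{1}}(t_{0}),\ldots,q_{i_{m-r}}(t_{0})$ of $Q(t_{0})$ are linearly independent (possible since $Q(t_{0})$ has rank $m-r$). Continuity of the corresponding $(m-r)\times(m-r)$ minor of $Q$ then yields a relative neighbourhood $U_{t_{0}}\subset I$ on which these columns of $Q$ remain independent and therefore form a $\mathcal{C}^1$ frame of $N$, because each column of $Q(t)$ lies in $\operatorname{im}Q(t)=N(t)$.

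To pass from this local frame to a genuinely global one on $I$, my plan is to transport a fixed basis $v_{1},\ldots,v_{m-r}$ of $N(t_{0})$ by the fundamental matrix $U(t)$ of the dynamic equation
$$U^{\Delta}(t)=\bigl(Q^{\Delta}(t)Q(t)-Q(t)Q^{\Delta}(t)\bigr)U(t),\qquad U(t_{0})=I,$$
whose solutions on time scales are well-defined as soon as the coefficient matrix is regressive, by the regressivity theorem stated above. A direct computation gives $(U^{-1}QU)^{\Delta}=0$, hence $Q(t)U(t)=U(t)Q(t_{0})$, so $U(t)$ sends $N(t_{0})$ bijectively onto $N(t)$, and $n_{j}(t):=U(t)v_{j}$ is the desired global $\mathcal{C}^1$ basis. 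The main obstacle is precisely the regressivity of the coefficient matrix $Q^{\Delta}Q-QQ^{\Delta}$ on an arbitrary time scale; on $\mathbb{T}=\mathbb{R}$ this is automatic, but in general it may fail at right-scattered points and must either be assumed or circumvented by a compact covering argument in which local frames produced above are glued on overlaps via $\mathcal{C}^1$ invertible transition matrices obtained by Gram--Schmidt. Either route produces the required $\mathcal{C}^1$ spanning set and closes the equivalence.
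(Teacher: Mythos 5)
Your ``$\Leftarrow$'' direction is exactly the paper's: form $F=(n_1,\ldots,n_{m-r})$ and take $Q=F\left(F^TF\right)^{-1}F^T$. The issue is the ``$\Rightarrow$'' direction, where you propose the classical Daleckii--Krein/Kato transport $U^{\Delta}=\left(Q^{\Delta}Q-QQ^{\Delta}\right)U$ and assert that ``a direct computation gives $(U^{-1}QU)^{\Delta}=0$.'' That computation does not survive the passage to a general time scale. On $\mathbb{R}$ it rests on the Leibniz rule together with $Q'=Q'Q+QQ'$, hence $QQ'Q=0$; on a time scale one instead has $(fg)^{\Delta}=f^{\Delta}g+f^{\sigma}g^{\Delta}$ and $(Q^2)^{\Delta}=Q^{\Delta}Q+Q^{\sigma}Q^{\Delta}=Q^{\Delta}$, which gives $QQ^{\Delta}Q=-\mu\,(Q^{\Delta})^2Q\neq 0$ in general. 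If you track the intertwining condition $Q^{\sigma}U^{\sigma}=U^{\sigma}Q(t_0)$ across a right-scattered point, with $U^{\sigma}=(E+\mu S)U$ and $S=Q^{\Delta}Q-QQ^{\Delta}$, it reduces to $\mu^{2}(Q^{\Delta})^{3}=0$, which fails whenever $\mu>0$ and $Q^{\Delta}$ is not nilpotent of that order. So your central identity is false as stated, independently of the regressivity issue you flag (which is a real but secondary obstacle, and one the paper itself also leaves implicit for $t<t_0$). Your fallback --- gluing local column-frames of $Q$ via Gram--Schmidt over a covering --- could in principle be made to work, but it is only gestured at and is the hard part, so as written the forward implication is not proved.

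For comparison, the paper avoids the commutator transport entirely: it propagates each basis vector of $N(t_0)$ by the simpler equation $n_j^{\Delta}=Q^{\Delta}n_j$, $n_j(t_0)=n_j^0$, and then uses the time-scale identity $0=(PQ)^{\Delta}=P^{\Delta}Q+P^{\sigma}Q^{\Delta}$ (with $P=E-Q$) to show $(Pn_j)^{\Delta}=P^{\Delta}(Pn_j)$ with $Pn_j(t_0)=0$, whence $Pn_j\equiv 0$ and $n_j(t)\in N(t)$. That choice of transport equation is precisely what makes the product-rule bookkeeping close up on an arbitrary time scale; if you want to salvage your argument, replacing your $U$-equation by this componentwise propagation (or reproving an intertwining identity adapted to the $\sigma$-shifted product rule) is the way to do it.
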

\begin{proof}

\begin{enumerate}
\item Suppose that there is a smooth projector $Q$ onto $N$. Take $t_0\in I$ arbitrarily. Let
\begin{equation*}
n_1^0,\ldots,n_{m-r}^0\in \mathbb{R}^m
\end{equation*}
be a basis of $N(t_0)$.  We choose $n_j$, $j\in \{1, \ldots, m-r\}$,  as the solution of the initial value
\begin{eqnarray*}
n_j^{\Delta}&=& Q^{\Delta}n_j\quad \text{on}\quad I,\\ \\
n_j(t_0)&=& n_j^0,\quad j\in \{1, \ldots, m-r\}.
\end{eqnarray*}
Let also,
\begin{equation*}
P=E-Q.
\end{equation*}
Here $E$ stands for the unit matrix. Then
\begin{equation*}
PQ=0,\quad Pn_j^0=0,\quad j\in \{1, \ldots, m-r\},
\end{equation*}
and
\begin{eqnarray*}
0&=& (PQ)^{\Delta}= P^{\Delta}Q+P^{\sigma}Q^{\Delta},
\end{eqnarray*}
whereupon
\begin{equation*}
P^{\Delta}Q=-P^{\sigma}Q^{\Delta}.
\end{equation*}
Next,
\begin{equation*}
P^{\sigma}n_j^{\Delta}= P^{\sigma}Q^{\Delta} n_j,
\end{equation*}
and
\begin{eqnarray*}
(Pn_j)^{\Delta}&=& P^{\Delta}n_j+P^{\sigma}n_j^{\Delta}= P^{\Delta}n_j+P^{\sigma}Q^{\Delta}n_j
= P^{\Delta}n_j-P^{\Delta}Q n_j\\ \\
&=& P^{\Delta}(E-Q)n_j= P^{\Delta}Pn_j\quad \text{on}\quad I, \quad j\in \{1, \ldots, m-r\}.
\end{eqnarray*}
So, we obtain the following IVP
\begin{eqnarray*}
(Pn_j)^{\Delta}&=& P^{\Delta}Pn_j\quad \text{on}\quad I\\ \\
Pn_j(t_0)&=& 0,\quad j\in \{1, \ldots, m-r\}.
\end{eqnarray*}
Therefore $Pn_j=0$ on $I$, $j\in \{1, \ldots, m-r\}$,
 and $n_j\in N$ on  $I$. Since $n_1, \ldots, n_{m-r}$ are linearly independent  on $I$ (otherwise, the dimension of $N$ is non-constant), we conclude that they span $N$ on $I$.
\item Let $N$ be spanned by
\begin{equation*}
F=(n_1, \ldots, n_{m-r}),\quad n_j\in \mathcal{C}^1(I),\quad j\in \{1, \ldots, m-r\},
\end{equation*}
on $I$. Take
\begin{equation*}
Q= F\left(F^TF\right)^{-1}F^T\quad\text{on}\quad  I.
\end{equation*}
Observe that the matrix $F^TF$ is invertible since vectors $n_1,\ldots, n_{m-r}$ are linearly independent. Then
\begin{eqnarray*}
QQ&=& F\left(F^TF\right)^{-1}F^TF\left(F^TF\right)^{-1}F^T\\ \\
&=& F\left(F^TF\right)^{-1}F^T\\ \\
&=& Q.
\end{eqnarray*}
Note that $Q\in \mathcal{C}^1(I)$. This completes the proof.
\end{enumerate}
\end{proof}
\begin{definition}
If the null space $N=\text{ker}\,A$, $\text{dim}\,N=m-r$, satisfies Theorem \ref{theorem3.4}, we will say that $N$ depends on $t\in I$ smoothly.
\end{definition}

\subsection{\emph{Matrix Chains}}
Suppose that $(A, B)$ is a properly stated matrix pair  on $I$ and  $C\in \mathcal{C}(I)$. Set
\begin{equation*}
G_0=AB.
\end{equation*}
Denote with $P_0$ a projector along $\text{ker}\,G_0$. Assume that $R$ is a continuous projector onto $\text{im}\,B$ and along $\text{ker}\,A$. With $B^{-}$ we denote the \emph{$\{1, 2\}$-inverse} of $B$,  determined by
\begin{equation}
\label{3.5}
\begin{array}{lll}
BB^-B&=& B,\quad
B^-BB^-= B^-,\\ \\
B^-B&=& P_0,\quad
BB^-= R.
\end{array}
\end{equation}
Note that the matrix $B^-$ is uniquely determined by the conditions \eqref{3.5}. Set
\begin{equation*}
Q_0=E-P_0.
\end{equation*}
We assume that a matrix $G_1: I \mapsto {\mathcal M}_{m,m}$ is such that
\begin{description}
\item[(A1)] $G_1$  has a constant rank $r_1$ on $I$.
\item[(A2)] $N_1=\text{ker}\,G_1$ satisfies $N_0\cap N_1=\{0\}$.
\end{description}
We choose a continuous projector $Q_1$ onto $N_1$ such that
\begin{description}
\item[(A3)] $Q_1Q_0=0$.
\end{description}
Set $P_1=E-Q_1$ and assume that
\begin{description}
\item[(A4)] $BP_0 P_1 B^-\in \mathcal{C}^1(I)$.
\end{description}
Assume that $(A1)$-$(A4)$ hold and set $C_0=C$.
Define
\begin{eqnarray*}
G_1&=& G_0+C_0 Q_0.
\end{eqnarray*}
\begin{remark}
Note that the existence of a projector $Q_1$ so that $Q_1 Q_0=0$ relies on the fact that the condition $(A2)$ makes it possible to choose a projector $Q_1$ onto $N_1$ such that
$N_0\subseteq \text{ker}\,Q_1$.
Since $Q_0$ projects onto $N_0$,  we have
$\text{im}\,Q_0=N_0$.
Hence, for any $x\in \mathbb{R}^m$, we have
\begin{equation*}
Q_0 x\in N_0\subseteq \text{ker}\,Q_1
\end{equation*}
and then $Q_1 Q_0 x=0$, i.e., $Q_1Q_0=0$.
\end{remark}
\begin{definition}\label{admissible}
The projectors  $\{Q_0,  Q_1\}$, respectively $\{P_0,  P_1\}$,  are said to be admissible up to level $1$ if $(A1)$, $(A2)$  and $(A3)$ hold.
\end{definition}
\begin{definition}
The matrix pair $(A, B)$ is said to be the leading term for the LTVDAE \eqref{1}.
\end{definition}
\begin{definition}
Let the leading term of \eqref{1} is properly stated on $I$. The equation \eqref{1} is said to be regular with tractability index $1$ if there are projectors $Q_0$ and $Q_1$ that are admissible up to level $1$, $G_0$ is singular and $G_1$ is nonsingular.
\end{definition}
\begin{proposition}\label{proposition3.6}
Let $\{Q_0, Q_1\}$ be admissible up to level $1$. Then $\text{ker}\,P_0P_1=N_0\oplus N_1$.
\end{proposition}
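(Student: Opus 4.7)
The plan is a two-inclusion argument; both directions are essentially unpacking the defining properties of the projectors, and no serious obstacle arises.

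First I would establish the easy inclusion $N_0\oplus N_1\subseteq \ker(P_0P_1)$. Hypothesis $(A2)$ ensures $N_0\cap N_1=\{0\}$ so the direct sum makes sense. Take $x\in N_0$. Condition $(A3)$, $Q_1Q_0=0$, means $N_0=\mathrm{im}\,Q_0\subseteq\ker Q_1$, hence $Q_1x=0$ and $P_1x=x$. Since $x\in N_0=\mathrm{im}\,Q_0=\ker P_0$, we get $P_0P_1x=P_0x=0$. For $x\in N_1$ one simply notes $P_1x=0$ (since $Q_1x=x$), so trivially $P_0P_1x=0$. By linearity, $N_0+N_1\subseteq\ker(P_0P_1)$.

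For the reverse inclusion, let $x\in\ker(P_0P_1)$. Then $P_1x\in\ker P_0=\mathrm{im}\,Q_0=N_0$. Write the tautology
\begin{equation*}
x = P_1x + Q_1x,
\end{equation*}
where $P_1x\in N_0$ and $Q_1x\in\mathrm{im}\,Q_1=N_1$. Hence $x\in N_0+N_1$, and by $(A2)$ this sum is direct.

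The hardest step, such as it is, is recognizing that $Q_1Q_0=0$ is precisely what forces $P_1$ to restrict to the identity on $N_0$; the remainder reduces to the standard identities $\ker P_i=\mathrm{im}\,Q_i$ and $\mathrm{im}\,P_i=\ker Q_i$ for complementary projectors.
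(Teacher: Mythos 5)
Your proof is correct and follows essentially the same route as the paper: the reverse inclusion uses the identical decomposition $x=P_1x+Q_1x$ with $P_1x\in\ker P_0=N_0$ and $Q_1x\in N_1$, and the forward inclusion uses $(A3)$ in the same essential way (the paper computes $P_0(Q_0-Q_1Q_0)w_0=P_0Q_0w_0=0$, which is just your observation that $P_1$ is the identity on $N_0$ phrased as a matrix computation). No substantive difference.
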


\begin{proof}

\begin{enumerate}
\item Suppose that $z\in \text{ker}\,P_0 P_1$. Then
\begin{equation}
\label{3.7} P_0P_1z=0.
\end{equation}
Set
\begin{eqnarray*}
z_1&=& P_1z= (E-Q_1)z= z-Q_1z,
\end{eqnarray*}
i.e.,
\begin{equation*}
z=z_1+Q_1 z.
\end{equation*}
Hence, \eqref{3.7} holds if and only if $z_1\in \text{ker}\,P_0=N_0$. Note that $Q_1z\in N_1$. Therefore
\begin{equation*}
z\in N_0\oplus N_1.
\end{equation*}
Because $z\in \text{ker}\,P_0P_1$ was arbitrarily chosen and we get that it is an element of $N_0\oplus N_1$, we arrive at the relation
\begin{equation}
\label{3.8} \text{ker}\,P_0 P_1\subseteq N_0\oplus N_1.
\end{equation}
\item Let $z\in N_0\oplus N_1$ be arbitrarily chosen. Then
\begin{equation*}
z=z_0+z_1,
\end{equation*}
where $z_0\in N_0$ and $z_1\in N_1$. Since
\begin{equation*}
\text{ker}\,P_1=N_1,
\end{equation*}
we have
\begin{equation*}
P_1z_1=0.
\end{equation*}
Next, by $z_0\in N_0$, it follows that there is a $w_0\in \mathbb{R}^m$ so that
\begin{equation*}
z_0=Q_0 w_0.
\end{equation*}
Therefore
\begin{eqnarray*}
P_0P_1z&=& P_0P_1(z_0+z_1)= P_0P_1z_0+P_0P_1z_1= P_0P_1Q_0w_0\\ \\
&=& P_0(E-Q_1)Q_0 w_0= P_0(Q_0-Q_1Q_0)w_0= P_0 Q_0 w_0= 0,
\end{eqnarray*}
i.e., $z\in \text{ker}\,P_0P_1$. Since $z\in N_0\oplus N_1$ was arbitrarily chosen and we get that it is an element of $\text{ker}\, P_0P_1$, we obtain the relation
\begin{equation*}
N_0\oplus N_1\subseteq \text{ker}\,P_0P_1.
\end{equation*}
By the last inclusion and \eqref{3.8}, we obtain
$\text{ker}\,P_0P_1=N_0\oplus N_1$.
This completes the proof.
\end{enumerate}
\end{proof}
\subsection{\emph{Independence of the Matrix Chains}}
We will start this subsection with the following useful lemmas.
\begin{lemma}
\label{lemma3.10} Let $\{Q_0, Q_1\}$ and $\{\overline{Q}_0, \overline{Q}_1\}$ be two admissible up to level $1$ projector sequences and $\{G_0, N_0,  G_1, N_1\}$ and $\{\overline{G}_0, \overline{N}_0,   \overline{G}_1, \overline{N}_1\}$ be the corresponding sequences. Then
\begin{equation*}
\overline {B}^-= \overline{P}_0B^-,
\end{equation*}
\begin{equation}
\label{3.12}
Q_0 \overline{Q}_0=\overline{Q}_0,
\end{equation}
\begin{equation}
\label{3.13} \overline{Q}_0 Q_0=Q_0,
\end{equation}
\begin{equation*}
 P_0 \overline{P}_0=P_0,
\end{equation*}
\begin{equation*}
\overline{P}_0P_0=\overline{P}_0
\end{equation*}
and
\begin{equation*}
G_0 Q_0=G_0\overline{Q}_0=0.
\end{equation*}
\end{lemma}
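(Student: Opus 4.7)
The starting observation is that $G_0=AB$ depends only on the pair $(A,B)$, not on the projector chosen, so $\overline{G}_0=G_0$ and consequently $\overline{N}_0=N_0$. Thus both $Q_0$ and $\overline{Q}_0$ are projectors onto the \emph{same} subspace $N_0$. All the projector identities in the lemma then reduce to the following familiar fact about two projectors with a common image: if $\pi_1$ and $\pi_2$ project onto the same subspace $V$, then $\pi_1\pi_2=\pi_2$ (since $\pi_2 x\in V$ and $\pi_1$ restricted to $V$ is the identity) and symmetrically $\pi_2\pi_1=\pi_1$.

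From this single observation I would deduce \eqref{3.12} and \eqref{3.13} in one line each. The two identities $P_0\overline{P}_0=P_0$ and $\overline{P}_0P_0=\overline{P}_0$ then follow by straightforward expansion: writing $P_0=E-Q_0$ and $\overline{P}_0=E-\overline{Q}_0$ and multiplying out, the cross terms $Q_0\overline{Q}_0$ and $\overline{Q}_0Q_0$ are replaced using \eqref{3.12}--\eqref{3.13} and the formulas collapse. For $G_0Q_0=0$ and $G_0\overline{Q}_0=0$, I would just note that $\mathrm{im}\,Q_0=\mathrm{im}\,\overline{Q}_0=N_0=\ker G_0$.

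The only identity with a little bit of content is $\overline{B}^-=\overline{P}_0 B^-$. Here the strategy is to check that the right-hand side satisfies the four defining relations \eqref{3.5} for $\overline{B}^-$ and then invoke uniqueness. The key auxiliary observation needed is that $B\overline{P}_0=B$, which holds because $\mathrm{im}\,\overline{Q}_0=N_0=\ker B$, so $B\overline{Q}_0=0$. Using this together with the relations \eqref{3.5} for $B^-$ and the already-proved identity $\overline{P}_0 P_0=\overline{P}_0$, one verifies in turn:
\begin{align*}
B(\overline{P}_0B^-)B &= BB^-B = B,\\
(\overline{P}_0B^-)B(\overline{P}_0B^-) &= \overline{P}_0\overline{P}_0 B^- = \overline{P}_0 B^-,\\
(\overline{P}_0B^-)B &= \overline{P}_0 P_0 = \overline{P}_0,\\
B(\overline{P}_0B^-) &= BB^- = R,
\end{align*}
so $\overline{P}_0 B^-$ meets the conditions that characterize $\overline{B}^-$, and uniqueness gives the claim.

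I do not expect any real obstacle in this lemma; the whole proof is essentially a bookkeeping exercise once one has isolated the fact $N_0=\overline{N}_0$ and the elementary lemma on two projectors onto the same subspace. If anything is delicate, it is only remembering to use $B\overline{P}_0=B$ at the right moment in the verification of the $\{1,2\}$-inverse formula; everything else is linear-algebraic routine.
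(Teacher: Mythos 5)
Your proposal is correct, and for five of the six identities it is the paper's own argument: the paper likewise starts from $\overline{G}_0=G_0$, hence $\overline{N}_0=N_0$, proves \eqref{3.12}--\eqref{3.13} by exactly your "two projectors with a common image" observation (it just writes out the case split $z\in\ker\overline{Q}_0$ versus $z\in\mathrm{im}\,\overline{Q}_0$ explicitly), gets $P_0\overline{P}_0=P_0$ and $\overline{P}_0P_0=\overline{P}_0$ by expanding $E-Q_0$ and $E-\overline{Q}_0$, and gets $G_0Q_0=G_0\overline{Q}_0=0$ from $\mathrm{im}\,Q_0=\mathrm{im}\,\overline{Q}_0=N_0=\ker G_0$. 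The one place you diverge is $\overline{B}^-=\overline{P}_0B^-$: the paper does not verify the four axioms and appeal to uniqueness, but instead runs the direct chain
\begin{equation*}
\overline{P}_0B^-=\overline{B}^-BB^-=\overline{B}^-R=\overline{B}^-B\overline{B}^-=\overline{B}^-,
\end{equation*}
using only $\overline{P}_0=\overline{B}^-B$, $BB^-=R=B\overline{B}^-$ and the $\{1,2\}$-inverse property of $\overline{B}^-$. That computation is shorter, needs neither the uniqueness assertion after \eqref{3.5} nor the auxiliary fact $B\overline{P}_0=B$ (which in turn rests on $\ker G_0=\ker B$ for properly stated pairs), and does not force you to prove $\overline{P}_0P_0=\overline{P}_0$ before the $\overline{B}^-$ identity, as your ordering requires. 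Your route is equally valid --- the uniqueness of the $\{1,2\}$-inverse subject to the prescribed $B^-B$ and $BB^-$ is standard and is asserted in the paper --- but it carries slightly more logical baggage; just make sure to state explicitly that you establish the projector identities first and then return to the $\overline{B}^-$ claim.
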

\begin{proof}
We have by definition
\begin{eqnarray*}
G_0&=& \overline{G}_0,\quad N_0= \overline{N}_0
\end{eqnarray*}
and
\begin{equation*}
\text{ker}\,P_0=\text{ker}\,\overline{P}_0.
\end{equation*}
Moreover,
\begin{eqnarray*}
BB^-B&=& B,\quad B^-BB^-= B^-,\quad BB^-= R,\quad B^-B= P_0
\end{eqnarray*}
and
\begin{eqnarray*}
B\overline{B}^-B&=& B,\quad
\overline{B}^-B \overline{B}^-= \overline{B}^-,\quad B\overline{B}^-= R,\quad \overline{B}^-B= \overline{P}_0.
\end{eqnarray*}
Hence,
\begin{eqnarray*}
\overline{P}_0B^-&=& \overline{B}^-BB^-= \overline{B}^-R= \overline{B}^-B \overline{B}^-= \overline{B}^-.
\end{eqnarray*}
Since $N_0=\overline{N}_0$, $\text{im}\,Q_0=N_0$, $\text{im}\,\overline{Q}_0=\overline{N}_0$, we have
\begin{equation*}
\text{im}\,Q_0=\text{im}\,\overline{Q}_0.
\end{equation*}
Take $z\in \mathbb{R}^m$ arbitrarily. If $z\in \text{ker}\,\overline{Q}_0$, then
\begin{equation*}
\overline{Q}_0z=0
\end{equation*}
and
\begin{eqnarray*}
Q_0 \overline{Q}_0 z&=& Q_00= 0.
\end{eqnarray*}
If $z\in \text{im}\,\overline{Q}_0$, then since $\overline{Q}_0$ is a projector, we have
\begin{equation*}
\overline{Q}_0 z=z
\end{equation*}
and
\begin{equation*}
z\in \text{im}\,Q_0=\text{im}\,\overline{Q}_0.
\end{equation*}
Therefore
\begin{equation*}
Q_0 z=z
\end{equation*}
and
\begin{eqnarray*}
Q_0 \overline{Q}_0z&=& Q_0 z= z= \overline{Q}_0z.
\end{eqnarray*}
Because $z\in \mathbb{R}^m$ was arbitrarily chosen, we conclude that
\begin{equation*}
Q_0 \overline{Q}_0=\overline{Q}_0.
\end{equation*}
Let $y\in \mathbb{R}^m$ be arbitrarily chosen. If $y\in \text{ker}\,Q_0$, then
\begin{equation*}
Q_0y=0
\end{equation*}
and
\begin{eqnarray*}
\overline{Q}_0Q_0y&=& \overline{Q}_0= 0= Q_0y.
\end{eqnarray*}
If $y\in \text{im}\,Q_0$, then $y\in \text{im}\,\overline{Q}_0$ and
\begin{eqnarray*}
\overline{Q}_0y&=& Q_0y= y.
\end{eqnarray*}
From here,
\begin{eqnarray*}
\overline{Q}_0 Q_0y&=& \overline{Q}_0 y= y= Q_0y.
\end{eqnarray*}
Because $y\in \mathbb{R}^m$ was arbitrarily chosen, we arrive at
\begin{equation*}
\overline{Q}_0Q_0=Q_0.
\end{equation*}
By \eqref{3.12}, we find
\begin{equation*}
E-\overline{P}_0=(E-P_0)(E-\overline{P}_0)= E-P_0-\overline{P}_0+P_0\overline{P}_0,
\end{equation*}
whereupon
\begin{equation*}
P_0\overline{P}_0=P_0.
\end{equation*}
By \eqref{3.13}, we find
\begin{eqnarray*}
E-P_0&=& (E-\overline{P}_0)(E-P_0)= E-P_0-\overline{P}_0+\overline{P}_0P_0,
\end{eqnarray*}
from where
\begin{equation*}
\overline{P}_0P_0=\overline{P}_0.
\end{equation*}
Since
\begin{equation*}
\text{im}\,Q_0=N_0=\text{ker}\,G_0=\text{im}\,\overline{Q}_0,
\end{equation*}
we get
\begin{eqnarray*}
G_0Q_0&=& G_0 \overline{Q}_0= 0.
\end{eqnarray*}
This completes the proof.
\end{proof}
\begin{lemma}
\label{lemma3.17} Let $\{Q_0,  Q_1\}$ and $\{\overline{Q}_0,  \overline{Q}_1\}$ be two admissible up to level $1$ projector sequences and $\{G_0, N_0,   G_1, N_1\}$ and $\{\overline{G}_0, \overline{N}_0,   \overline{G}_1, \overline{N}_1\}$ be the corresponding sequences. If
\begin{equation*}
Z_1=E+Q_0 \overline{Q}_0P_0,
\end{equation*}
then
\begin{equation*}
\overline{G}_1=G_1Z_1
\end{equation*}
and
\begin{equation*}
\text{ker}\,P_0P_1=\text{ker}\,\overline{P}_0\overline{P}_1=N_0\oplus N_1=\overline{N}_0\oplus \overline{N}_1.
\end{equation*}
\end{lemma}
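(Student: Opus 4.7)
The plan is to attack the two assertions in turn, using the identities from Lemma \ref{lemma3.10} as the main algebraic engine. For the identity $\overline{G}_1 = G_1 Z_1$, I would first unfold both sides as $\overline{G}_1 = G_0 + C\overline{Q}_0$ and $G_1 = G_0 + CQ_0$, using that $G_0 = \overline{G}_0$ and that $C_0 = C$ is independent of the choice of projectors. Expanding
\[
G_1 Z_1 = (G_0 + CQ_0)(E + Q_0 \overline{Q}_0 P_0)
\]
and killing the $G_0 Q_0$ term via Lemma \ref{lemma3.10}, the claim collapses to the purely projector-theoretic identity $Q_0 + Q_0 \overline{Q}_0 P_0 = \overline{Q}_0$. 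Writing $P_0 = E - Q_0$ and applying $Q_0 \overline{Q}_0 = \overline{Q}_0$ and $\overline{Q}_0 Q_0 = Q_0$ (both from Lemma \ref{lemma3.10}) together with $Q_0^2 = Q_0$, this reduces to $Q_0 + \overline{Q}_0 - Q_0 = \overline{Q}_0$, which is immediate.

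For the chain of set equalities, Proposition \ref{proposition3.6} already supplies $\text{ker}\, P_0 P_1 = N_0 \oplus N_1$ and, applied to the overlined chain, $\text{ker}\, \overline{P}_0 \overline{P}_1 = \overline{N}_0 \oplus \overline{N}_1$; hence only the middle equality $N_0 \oplus N_1 = \overline{N}_0 \oplus \overline{N}_1$ remains. Directness of both sums is automatic from assumption (A2) of admissibility at level $1$, so it suffices to prove $N_0 + N_1 = \overline{N}_0 + \overline{N}_1$. Since $N_0 = \overline{N}_0$, this further reduces to $N_0 + N_1 = N_0 + \overline{N}_1$. The key auxiliary observation is that $Z_1$ is invertible: because $P_0 Q_0 = 0$, we have $(Q_0 \overline{Q}_0 P_0)^2 = 0$, whence $Z_1^{-1} = E - Q_0 \overline{Q}_0 P_0$. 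Combined with the first part of the lemma, this yields $\overline{N}_1 = \text{ker}\,(G_1 Z_1) = Z_1^{-1}(N_1)$.

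Both containments then fall out of a single decomposition: for $y \in N_1$, I would write
\[
y = (y - Q_0 \overline{Q}_0 P_0 y) + Q_0 \overline{Q}_0 P_0 y,
\]
observing that the first summand is $Z_1^{-1} y \in \overline{N}_1$ and the second lies in $\text{im}\, Q_0 = N_0$. This shows $N_1 \subseteq \overline{N}_1 + N_0$; reading the same identity as $Z_1^{-1} y = y - Q_0 \overline{Q}_0 P_0 y$ shows $\overline{N}_1 \subseteq N_1 + N_0$. Taken together with $N_0 = \overline{N}_0$, the required chain of equalities follows. I do not anticipate any serious obstacle: the argument is a careful bookkeeping of the identities from Lemma \ref{lemma3.10}, the only delicate point being the asymmetry between $Q_0 \overline{Q}_0 = \overline{Q}_0$ and $\overline{Q}_0 Q_0 = Q_0$ and the need to apply them in the right order when simplifying $Q_0 \overline{Q}_0 P_0$.
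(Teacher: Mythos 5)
Your proposal is correct and follows essentially the same route as the paper: both establish $\overline{G}_1=G_1Z_1$ by expanding with the identities $G_0Q_0=0$, $Q_0\overline{Q}_0=\overline{Q}_0$, $\overline{Q}_0Q_0=Q_0$ from Lemma \ref{lemma3.10}, and both then invoke Proposition \ref{proposition3.6} for the outer kernel equalities. Your treatment is in fact slightly more complete than the paper's, which passes from $\overline{N}_1=Z_1^{-1}(N_1)$ to $N_0\oplus N_1=\overline{N}_0\oplus\overline{N}_1$ without comment, whereas you supply the justification by observing that $Z_1$ is invertible with $(Q_0\overline{Q}_0P_0)^2=0$ and that $Z_1-E$ has image inside $N_0$, so that modifying by $Z_1^{\pm 1}$ does not change the sum with $N_0$.
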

\begin{proof}
Applying \eqref{3.12} and \eqref{3.13}, we get
\begin{eqnarray*}
\overline{G}_1&=&  G_0+C \overline{Q}_0= G_0+C Q_0 \overline{Q}_0= G_0+CQ_0\overline{Q}_0(P_0+Q_0)\\ \\
&=& G_0+C Q_0\overline{Q}_0P_0+C Q_0\overline{Q}_0Q_0= G_0+C Q_0\overline{Q}_0P_0+C Q_0Q_0= G_0+C Q_0+C Q_0\overline{Q}_0P_0\\ \\
&=& G_1+C Q_0\overline{Q}_0P_0= G_1+(G_0 Q_0+CQ_0^2)\overline{Q}_0 P_0= G_1+(G_0+C Q_0)Q_0 \overline{Q}_0P_0\\ \\
&=& G_1+G_1Q_0 \overline{Q}_0P_0= G_1(E+Q_0 \overline{Q}_0P_0)= G_1Z_1.
\end{eqnarray*}
Hence,
\begin{equation*}
N_1=Z_1^{-1}\overline{N}_1
\end{equation*}
and thus,
\begin{equation*}
N_0\oplus N_1=\overline{N}_0\oplus \overline{N}_1.
\end{equation*}
By Proposition \ref{proposition3.6}, it follows that
\begin{equation*}
\text{ker}\,P_0 P_1=\text{ker}\,\overline{P}_0\overline{P}_1=N_0\oplus N_1=\overline{N}_0\oplus \overline{N}_1.
\end{equation*}
This completes the proof.
\end{proof}
By Lemma \ref{lemma3.10} and Lemma \ref{lemma3.17}, it follows the following result.
\begin{theorem}
The subspaces $\text{im}\,G_0$, $\text{im}\,G_1$, $N_0$ and $N_0\oplus N_1$ do not all depend on the special choice of the projectors $Q_0$ and $Q_1$.
\end{theorem}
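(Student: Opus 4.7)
The plan is to treat the four subspaces one at a time, leaning on the two preceding lemmas. The first two, $\text{im}\,G_0$ and $N_0$, are essentially immediate: $G_0=AB$ is built directly from the matrix pair $(A,B)$ and makes no reference to any projector, so both $\text{im}\,G_0$ and $N_0=\text{ker}\,G_0$ are intrinsic data of the pair, and nothing else needs to be said. Likewise, the identity $N_0\oplus N_1=\overline{N}_0\oplus\overline{N}_1$ is part of the statement of Lemma \ref{lemma3.17}, so this case is settled by a citation.

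The one substantive item is $\text{im}\,G_1$. Here I would invoke the factorization $\overline{G}_1=G_1 Z_1$ with $Z_1=E+Q_0\overline{Q}_0P_0$ supplied by Lemma \ref{lemma3.17}. Once $Z_1$ is known to be invertible, the two inclusions $\text{im}\,\overline{G}_1\subseteq\text{im}\,G_1$ (from $\overline{G}_1=G_1Z_1$) and $\text{im}\,G_1\subseteq\text{im}\,\overline{G}_1$ (from $G_1=\overline{G}_1 Z_1^{-1}$) are automatic, so the whole question reduces to the invertibility of $Z_1$.

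That invertibility is the only nonroutine step, and it is short. Put $M=Q_0\overline{Q}_0P_0$; since $Q_0$ is a projector, $P_0Q_0=(E-Q_0)Q_0=0$, so $M^2=Q_0\overline{Q}_0(P_0Q_0)\overline{Q}_0P_0=0$. Consequently $(E+M)(E-M)=E-M^2=E$, which gives $Z_1^{-1}=E-Q_0\overline{Q}_0P_0$ explicitly. I do not foresee any hidden obstacle: the heavy lifting was already carried out in the proofs of Lemmas \ref{lemma3.10} and \ref{lemma3.17}, and the theorem is essentially a corollary obtained by reading off their conclusions.
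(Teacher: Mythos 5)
Your proposal is correct and follows essentially the same route as the paper, which simply derives the theorem as a corollary of Lemmas \ref{lemma3.10} and \ref{lemma3.17}. In fact you supply one detail the paper leaves implicit: the invertibility of $Z_1=E+Q_0\overline{Q}_0P_0$ (via $P_0Q_0=0$, hence $M^2=0$ and $Z_1^{-1}=E-Q_0\overline{Q}_0P_0$), which the paper uses without comment when it writes $N_1=Z_1^{-1}\overline{N}_1$ in the proof of Lemma \ref{lemma3.17}.
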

\section{An Important Particular Case}
Consider the equation
\begin{equation}
\label{4.2} A^{\sigma}x^{\Delta}=C^{\sigma}x^{\sigma}+f.
\end{equation}
We will show that the equation \eqref{4.2} can be reduced to the equation \eqref{1}. Suppose that $P$ is a $\mathcal{C}^1$-projector along $\text{ker}\,A^{\sigma}$. Then
\begin{equation*}
A^{\sigma}P=A^{\sigma}
\end{equation*}
and
\begin{eqnarray*}
A^{\sigma}x^{\Delta}&=& A^{\sigma}Px^{\Delta}= A^{\sigma}(Px)^{\Delta}-A^{\sigma}P^{\Delta}x^{\sigma}.
\end{eqnarray*}
Hence, the equation \eqref{4.2} takes the form
\begin{eqnarray*}
A^{\sigma}(Px)^{\Delta}-A^{\sigma}P^{\Delta}x^{\sigma}=C^{\sigma}x^{\sigma}+f
\end{eqnarray*}
or
\begin{equation*}
A^{\sigma}(Px)^{\Delta}=(A^{\sigma}P^{\Delta}+C^{\sigma})x^{\sigma}+f.
\end{equation*}
Set
\begin{equation*}
C_1^{\sigma}=A^{\sigma}P^{\Delta}+C^{\sigma}.
\end{equation*}
Thus, \eqref{4.2} takes the form
\begin{equation}
\label{4.3} A^{\sigma}(Px)^{\Delta}=C^{\sigma}x^{\sigma}+f,
\end{equation}
i.e.,  the equation \eqref{4.2} is a particular case of the equation \eqref{1}.

\section{Standard Form Index One Problems}
In this section, we will investigate equation \eqref{4.3}
where $\text{ker}\,A$ is a $\mathcal{C}^1$-space, $C\in \mathcal{C}(I)$, $P$ is a $\mathcal{C}^1$-projector along $\text{ker}\,A$. Then
\begin{equation*}
AP=A.
\end{equation*}
Assume in addition that
\begin{equation*}
Q=E-P
\end{equation*}
and
\begin{description}
\item[(B1)] the matrix
\begin{equation*}
A_1=A+CQ
\end{equation*}
is invertible.
\end{description}
The condition $(B1)$ ensures the equation \eqref{4.3} to be regular with tractability index $1$. We will start our investigations with the following useful lemma.
\begin{lemma}
\label{lemma4.4} Suppose that $(B1)$ holds. Then
\begin{equation*}
A_1^{-1}A=P
\end{equation*}
and
\begin{equation*}
A_1^{-1}CQ=Q.
\end{equation*}
\end{lemma}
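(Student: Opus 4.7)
The plan is to verify both identities by a direct calculation that exploits the projector relations built into the setup: since $P$ is a projector along $\ker A$, its complement $Q=E-P$ satisfies $\operatorname{im} Q = \ker A$, so $AQ=0$ and therefore $AP = A(P+Q) = A$. I also have $Q^2=Q$ and $PQ=QP=0$ because $P$ and $Q$ are complementary projectors.

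First I would compute $A_1P$:
\begin{equation*}
A_1 P = (A+CQ)P = AP + CQP = A + 0 = A.
\end{equation*}
Because $(B1)$ says $A_1$ is invertible, I can left-multiply by $A_1^{-1}$ to obtain $P = A_1^{-1}A$, which is the first claim.

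Second, I would compute $A_1Q$ in the same style:
\begin{equation*}
A_1 Q = (A+CQ)Q = AQ + CQ^2 = 0 + CQ = CQ.
\end{equation*}
Left-multiplying by $A_1^{-1}$ yields $Q = A_1^{-1}CQ$, which is the second claim.

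There is really no hard step here; the only thing to be careful about is the convention that ``projector along $\ker A$'' means $\ker P = \ker A$ (equivalently $\operatorname{im} Q = \ker A$), which is what forces $AQ=0$ and $AP=A$. Once that is pinned down, the identities $QP = 0$ and $Q^2=Q$ do all the work, and invertibility of $A_1$ supplied by $(B1)$ lets us invert on the left.
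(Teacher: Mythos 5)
Your proof is correct and follows essentially the same route as the paper: compute $A_1P=A$ and $A_1Q=CQ$ using $AP=A$, $QP=0$, $AQ=0$, $Q^2=Q$, then left-multiply by $A_1^{-1}$. Your explicit remark pinning down the convention $\ker P=\ker A$ (hence $\operatorname{im}Q=\ker A$) is a welcome clarification that the paper uses only implicitly.
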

\begin{proof}
We have
\begin{eqnarray*}
A_1P&=& (A+CQ)P= AP+CQP= A.
\end{eqnarray*}
Since $Q=E-P$ and $\text{ker}\,P=\text{ker}\,A$, we have
$\text{im}\,Q=\text{ker}\,A$ and
\begin{equation*}
AQ=0.
\end{equation*}
Then
\begin{eqnarray*}
A_1Q&=& (A+CQ)Q= AQ+CQQ= CQ.
\end{eqnarray*}
This completes the proof.
\end{proof}
Now, we multiply the equation \eqref{4.3} with $\left(A_1^{-1}\right)^{\sigma}$ and we get
\begin{equation*}
\left(A_1^{-1}\right)^{\sigma}A^{\sigma}(Px)^{\Delta}=\left(A_1^{-1}\right)^{\sigma}C^{\sigma}x^{\sigma}+\left(A_1^{-1}\right)^{\sigma}f.
\end{equation*}
Now, we employ the first equation of Lemma \ref{lemma4.4} and we get
\begin{equation}
\label{4.5} P^{\sigma}(Px)^{\Delta}=\left(A_1^{-1}\right)^{\sigma}C^{\sigma}x^{\sigma}+\left(A_1^{-1}\right)^{\sigma}f.
\end{equation}
We decompose $x$ in the following way
\begin{equation*}
x=Px+Qx.
\end{equation*}
Then the equation \eqref{4.5} takes the following form
\begin{eqnarray*}
P^{\sigma}(Px)^{\Delta}&=& \left(A_1^{-1}\right)^{\sigma}C^{\sigma}(P^{\sigma}x^{\sigma}+Q^{\sigma}x^{\sigma})+\left(A_1^{-1}\right)^{\sigma}f\\ \\
&=& \left(A_1^{-1}\right)^{\sigma}C^{\sigma}P^{\sigma}x^{\sigma}+\left(A_1^{-1}\right)^{\sigma}C^{\sigma}Q^{\sigma}x^{\sigma}+\left(A_1^{-1}\right)^{\sigma}f.
\end{eqnarray*}
Using the second equation of Lemma \ref{lemma4.4},  the last equation can be rewritten as follows
\begin{equation}
\label{4.6} P^{\sigma}(Px)^{\Delta}=\left(A_1^{-1}\right)^{\sigma}C^{\sigma}P^{\sigma}x^{\sigma}+Q^{\sigma}x^{\sigma}+\left(A_1^{-1}\right)^{\sigma}f.
\end{equation}
We multiply the equation \eqref{4.6} with the projector $P^{\sigma}$ and  using that
\begin{eqnarray*}
PP&=& P,\quad PQ= 0,
\end{eqnarray*}
we find
\begin{equation*}
P^{\sigma}P^{\sigma}(Px)^{\Delta}=P^{\sigma}\left(A_1^{-1}\right)^{\sigma}C^{\sigma}P^{\sigma}x^{\sigma}+P^{\sigma}Q^{\sigma} x^{\sigma}+P^{\sigma}\left(A_1^{-1}\right)^{\sigma}f
\end{equation*}
or
\begin{equation}
\label{4.7} P^{\sigma}(Px)^{\Delta}=P^{\sigma}\left(A_1^{-1}\right)^{\sigma}C^{\sigma}P^{\sigma}x^{\sigma}+P^{\sigma}\left(A_1^{-1}\right)^{\sigma}f.
\end{equation}
Note that
\begin{eqnarray*}
P^{\sigma}(Px)^{\Delta}&=& (PPx)^{\Delta}-P^{\Delta}Px= (Px)^{\Delta}-P^{\Delta}P x.
\end{eqnarray*}
Hence and using \eqref{4.7}, we find
\begin{equation*}
(Px)^{\Delta}-P^{\Delta}Px=P^{\sigma}\left(A_1^{-1}\right)^{\sigma}C^{\sigma}P^{\sigma}x^{\sigma}+P^{\sigma}\left(A_1^{-1}\right)^{\sigma}f
\end{equation*}
or
\begin{equation}
\label{4.8} (Px)^{\Delta}=P^{\Delta}Px+P^{\sigma}\left(A_1^{-1}\right)^{\sigma}C^{\sigma}P^{\sigma}x^{\sigma}+P^{\sigma}\left(A_1^{-1}\right)^{\sigma}f.
\end{equation}
Now, we multiply the equation \eqref{4.6} by $Q^{\sigma}$ and we find
\begin{equation*}
Q^{\sigma}P^{\sigma}(Px)^{\Delta}=Q^{\sigma}\left(A_1^{-1}\right)C^{\sigma}P^{\sigma}x^{\sigma}+Q^{\sigma}Q^{\sigma}x^{\sigma}+
Q^{\sigma}\left(A_1^{-1}\right)^{\sigma}f
\end{equation*}
or
\begin{equation}
\label{4.9} 0= Q^{\sigma}\left(A_1^{-1}\right)^{\sigma}C^{\sigma}P^{\sigma}x^{\sigma}+Q^{\sigma}x^{\sigma}+Q^{\sigma}\left(A_1^{-1}\right)^{\sigma}f.
\end{equation}
Set
\begin{eqnarray*}
u&=& Px,\quad v= Qx.
\end{eqnarray*}
Then, by \eqref{4.8} and \eqref{4.9}, we get the system
\begin{equation}
\label{4.10}
\begin{array}{lll}
u^{\Delta}&=&P^{\Delta}u+P^{\sigma}\left(A_1^{-1}\right)^{\sigma}C^{\sigma}u^{\sigma}+P^{\sigma}\left(A_1^{-1}\right)^{\sigma}f\\ \\
v^{\sigma}&=& -Q^{\sigma}\left(A_1^{-1}\right)^{\sigma}C^{\sigma}u^{\sigma}-Q^{\sigma}\left(A_1^{-1}\right)^{\sigma}f.
\end{array}
\end{equation}
We find $u\in \mathcal{C}^1(I)$ by the first equation of the system \eqref{4.10} and then we  find $v^{\sigma}\in \mathcal{C}(I)$ by the second equation of the system \eqref{4.10}. Then, for  the solution $x$ of the equation \eqref{4.3}  we have the following representation
\begin{eqnarray*}
x^{\sigma}&=& u^{\sigma}+v^{\sigma}= P^{\sigma}x^{\sigma}+Q^{\sigma}x^{\sigma}.
\end{eqnarray*}
\section{Decoupling}
Suppose that the equation \eqref{1} is regular with tractability index $1$. In addition, assume that $R$ is a continuous projector onto $\text{im}\,B$ and along $\text{ker}\,A$. Set
\begin{equation*}
G_0=AB
\end{equation*}
and take $P_0$ to be a continuous projector along $\ker{G}_0$  and denote
\begin{eqnarray*}
Q_0&=& E-P_0,\quad G_1= G_0+C Q_0,\\ \\
BB^-B&=& B,\quad B^- BB^-= B^-,\quad B^-B= P_0,\quad BB^-= R,\quad N_0= \text{ker}\,G_0.
\end{eqnarray*}
Since
\begin{equation*}
R=BB^-
\end{equation*}
and $R$ is a continuous projector along $\text{ker}\,A$, we have
\begin{eqnarray*}
A&=& AR= ABB^{-}= G_0 B^-.
\end{eqnarray*}
Then, we can rewrite the equation \eqref{1} as follows
\begin{equation*}
A^{\sigma}B^{\sigma}B^{-\sigma}(Bx)^{\Delta}=C^{\sigma}x^{\sigma}+f
\end{equation*}
or
\begin{equation*}
G_0^{\sigma}B^{-\sigma}(Bx)^{\Delta}=C^{\sigma}x^{\sigma}+f.
\end{equation*}
Now, we multiply both sides of the last equation with $\left(G_{1}^{-1}\right)^{\sigma}$ and we find
\begin{equation}
\label{4.20} \left(G_{1}^{-1}\right)^{\sigma}G_0^{\sigma}B^{-\sigma}(Bx)^{\Delta}= \left(G_{1}^{-1}\right)^{\sigma}C^{\sigma}x^{\sigma}+\left(G_{1}^{-1}\right)^{\sigma}f.
\end{equation}
Note that by the definition of $G_0$ and $Q_0$ we have $G_0Q_0=0$. Then
\begin{eqnarray*}
G_1(E-Q_0)&=& (G_0+C Q_0)(E-Q_0)=G_0+C Q_0-G_0 Q_0-CQ_0 Q_0\\ \\
&=& G_0+C Q_0-CQ_0=G_0,
\end{eqnarray*}
whereupon
\begin{equation*}
G_1^{-1}G_0=E-Q_0.
\end{equation*}
Thus, the equation \eqref{4.20} takes the form
\begin{equation}
\label{4.21} (E-Q_0^{\sigma})B^{-\sigma}(Bx)^{\Delta}= \left(G_1^{-1}\right)^{\sigma}C^{\sigma}x^{\sigma}+\left(G_1^{-1}\right)^{\sigma}f.
\end{equation}
Note that
\begin{equation*}
B^{\sigma}P_0^{\sigma}(E-Q_0^{\sigma})=B^{\sigma}(P_0^{\sigma}-P_0^{\sigma}Q_0^{\sigma})=B^{\sigma}P_0^{\sigma}.
\end{equation*}
Then, we multiply the equation \eqref{4.21} by $B^{\sigma}P_0^{\sigma}$ and we find
\begin{equation}
\label{4.22} B^{\sigma}P_0^{\sigma} B^{-\sigma}(Bx)^{\Delta}=B^{\sigma}P_0^{\sigma} \left(G_1^{-1}\right)^{\sigma}C^{\sigma}x^{\sigma}+B^{\sigma}P_0^{\sigma}\left(G_1^{-1}\right)^{\sigma}f.
\end{equation}
Note that
\begin{eqnarray*}
B^{\sigma}P_0^{\sigma}B^{-\sigma}(Bx)^{\Delta}&=& (BP_0 B^-Bx)^{\Delta}-(BP_0 B^-)^{\Delta}Bx\\ \\
&=& (BP_0P_0x)^{\Delta}-(BP_0 B^-)^{\Delta}Bx= (BP_0x)^{\Delta}-(BP_0 B^-)^{\Delta} Bx.
\end{eqnarray*}
From here, the equation \eqref{4.22} can be rewritten as follows
\begin{equation*}
(BP_0x)^{\Delta}=(BP_0 B^-)^{\Delta}Bx+B^{\sigma}P_0^{\sigma}\left(G_1^{-1}\right)^{\sigma}C^{\sigma}x^{\sigma}+B^{\sigma}P_0^{\sigma}\left(G_1^{-1}\right)^{\sigma}f.
\end{equation*}
Now, we use the decomposition
\begin{equation*}
x=P_0x+Q_0x,
\end{equation*}
to get
\begin{eqnarray*}
(BP_0 x)^{\Delta}&=& (BP_0 B^-)^{\Delta}BP_0x+(BP_0 B^-)^{\Delta}BQ_0x\\ \\
&&+B^{\sigma}P_0^{\sigma}\left(G_1^{-1}\right)^{\sigma}C^{\sigma}P_0^{\sigma}x^{\sigma}+B^{\sigma}P_0^{\sigma}\left(G_1^{-1}\right)^{\sigma}C^{\sigma}Q_0^{\sigma}x^{\sigma}\\ \\
&&+ B^{\sigma}P_0^{\sigma}\left(G_1^{-1}\right)^{\sigma}f.
\end{eqnarray*}
Since $\text{im}\,Q_0=\text{ker}\,B$, we have
\begin{equation*}
(BP_0 B^-)^{\Delta}BQ_0 x=0.
\end{equation*}
Next,
\begin{eqnarray*}
E&=& G_1^{-1}(G_0+CQ_0)=G_1^{-1}G_0+G_1^{-1}CQ_0\\ \\
&=& E-Q_0+G_1^{-1}CQ_0,
\end{eqnarray*}
whereupon
\begin{equation}
\label{4.23} G_1^{-1}CQ_0=Q_0.
\end{equation}
Then
\begin{equation*}
B^{\sigma}P_0^{\sigma}\left(G_1^{-1}\right)^{\sigma}C^{\sigma}Q_0^{\sigma}= B^{\sigma}P_0^{\sigma}Q_0^{\sigma}=0
\end{equation*}
and we arrive at the equation
\begin{eqnarray*}
(BP_0 x)^{\Delta}&=& (BP_0 B^-)^{\Delta}BP_0x
+B^{\sigma}P_0^{\sigma}\left(G_1^{-1}\right)^{\sigma}C^{\sigma}P_0^{\sigma}x^{\sigma}+ B^{\sigma}P_0^{\sigma}\left(G_1^{-1}\right)^{\sigma}f.
\end{eqnarray*}
Since
\begin{equation*}
B^-BP_0=P_0P_0=P_0,
\end{equation*}
the last equation can be rewritten in the form
\begin{eqnarray*}
(BP_0 x)^{\Delta}&=& (BP_0 B^-)^{\Delta}BP_0x
+B^{\sigma}P_0^{\sigma}\left(G_1^{-1}\right)^{\sigma}C^{\sigma}B^{-\sigma}B^{\sigma}P_0^{\sigma}x^{\sigma}+ B^{\sigma}P_0^{\sigma}\left(G_1^{-1}\right)^{\sigma}f.
\end{eqnarray*}
We set
\begin{equation*}
u=BP_0x.
\end{equation*}
Then, we obtain the equation
\begin{equation}
\label{4.24} u^{\Delta}= (BP_0 B^-)^{\Delta}u
+B^{\sigma}P_0^{\sigma}\left(G_1^{-1}\right)^{\sigma}C^{\sigma}B^{-\sigma}u^{\sigma}+ B^{\sigma}P_0^{\sigma}\left(G_1^{-1}\right)^{\sigma}f.
\end{equation}
Now, we multiply both sides of \eqref{4.21} by $Q_0^{\sigma}$ and using that
\begin{equation*}
Q_0^{\sigma}(E-Q_0^{\sigma})=Q_0^{\sigma}-Q_0^{\sigma}Q_0^{\sigma}=Q_0^{\sigma}-Q_0^{\sigma}=0,
\end{equation*}
we find
\begin{equation*}
0= Q_0^{\sigma}\left(G_1^{-1}\right)^{\sigma}C^{\sigma}x^{\sigma}+Q_0^{\sigma}\left(G_1^{-1}\right)^{\sigma}f,
\end{equation*}
or
\begin{eqnarray*}
Q_0^{\sigma}\left(G_1^{-1}\right)^{\sigma}f&=& -Q_0^{\sigma}\left(G_1^{-1}\right)^{\sigma}C^{\sigma}x^{\sigma}\\ \\
&=& -Q_0^{\sigma}\left(G_1^{-1}\right)^{\sigma}C^{\sigma}(P_0^{\sigma}x^{\sigma}+Q_0^{\sigma}x^{\sigma})\\ \\
&=& -Q_0^{\sigma}\left(G_1^{-1}\right)^{\sigma}C^{\sigma}P_0^{\sigma}x^{\sigma}-Q_0^{\sigma}\left(G_1^{-1}\right)^{\sigma}C^{\sigma}Q_0^{\sigma}x^{\sigma}\\ \\
&=& -Q_0^{\sigma}\left(G_1^{-1}\right)^{\sigma}C^{\sigma}P_0^{\sigma}x^{\sigma}-Q_0^{\sigma}Q_0^{\sigma}x^{\sigma}\\ \\
&=& -Q_0^{\sigma}\left(G_1^{-1}\right)^{\sigma}C^{\sigma}P_0^{\sigma}x^{\sigma}-Q_0^{\sigma}x^{\sigma},
\end{eqnarray*}
where we have used \eqref{4.23}. We set
\begin{equation*}
v=Q_0x.
\end{equation*}
Then, we get
\begin{equation*}
v^{\sigma}= -Q_0^{\sigma}\left(G_1^{-1}\right)^{\sigma}C^{\sigma}u^{\sigma}-Q_0^{\sigma}\left(G_1^{-1}\right)^{\sigma}f.
\end{equation*}
By the last equation and \eqref{4.24}, we obtain the system
\begin{equation}
\label{4.25}
\begin{array}{lll}
u^{\Delta}&=& (BP_0 B^-)^{\Delta}u
+B^{\sigma}P_0^{\sigma}\left(G_1^{-1}\right)^{\sigma}C^{\sigma}B^{-\sigma}u^{\sigma}+ B^{\sigma}P_0^{\sigma}\left(G_1^{-1}\right)^{\sigma}f\\ \\
v^{\sigma}&=& -Q_0^{\sigma}\left(G_1^{-1}\right)^{\sigma}C^{\sigma}u^{\sigma}-Q_0^{\sigma}\left(G_1^{-1}\right)^{\sigma}f.
\end{array}
\end{equation}
By the first equation of \eqref{4.25} we find $u$ and then by the second equation we find $v^{\sigma}$. Then, for the solution $x$ of \eqref{1}, we have
\begin{equation*}
B^{-\sigma}u^{\sigma}+v^{\sigma}= B^{-\sigma}B^{\sigma}P_0^{\sigma}x^{\sigma}+Q_0^{\sigma} x^{\sigma}=P_0^{\sigma}x^{\sigma}+Q_0^{\sigma}x^{\sigma}=x^{\sigma}.
\end{equation*}
Here we write $B^{-\sigma}$ instead of $(B^-)^\sigma$. Now, we will show that the described process above is $\sigma$-reversible, i.e., if $u\in \mathcal{C}^1(I)$, $u\in \text{im}\,BP_0B^-$ and $v^{\sigma}\in \mathcal{C}(I)$ satisfy \eqref{4.25} and
\begin{equation}
\label{4.25.1} x^{\sigma}= B^{-\sigma}u^{\sigma}+v^{\sigma},
\end{equation}
then $x$ satisfies \eqref{1}. Really, since $u\in \text{im}\,BP_0B^-$, we have
\begin{equation*}
BP_0B^-u=u
\end{equation*}
and
\begin{equation*}
BB^-u= BB^-BP_0B^-u=BP_0P_0B^-u=BP_0 B^-u=u.
\end{equation*}
By the second equation of \eqref{4.25}, using that $Q_0^{\sigma}Q_0^{\sigma}=Q_0^{\sigma}$, we find
\begin{equation*}
v^{\sigma}=Q_0^{\sigma}v^{\sigma}.
\end{equation*}
Because $\text{im}\,Q_0= \text{ker}\,B$, we have
\begin{equation*}
B^{\sigma}v^{\sigma}= B^{\sigma}Q_0^{\sigma}v^{\sigma}=0.
\end{equation*}
Using \eqref{4.25.1}, we find
\begin{eqnarray*}
B^{\sigma}P_0^{\sigma}x^{\sigma}&=& B ^{\sigma}P_0^{\sigma}(B^{-\sigma}u^{\sigma}+v^{\sigma})= B^{\sigma}P_0^{\sigma}(B^{-\sigma}u^{\sigma}+Q_0^{\sigma}v^{\sigma})\\ \\
&=& B^{\sigma}P_0^{\sigma}B^{-\sigma}u^{\sigma}=u^{\sigma}.
\end{eqnarray*}
Hence,
\begin{equation*}
B^{-\sigma}u^{\sigma}= B^{-\sigma}B^{\sigma}P_0^{\sigma} x^{\sigma}= P_0^{\sigma}P_0^{\sigma} x^{\sigma}=P_0^{\sigma}x^{\sigma}=x^{\sigma}-Q_0^{\sigma}x^{\sigma}
\end{equation*}
using \eqref{4.25.1}, we find
\begin{equation*}
x^{\sigma}=B^{-\sigma}u^{\sigma}+v^{\sigma}= x^{\sigma}-Q_0^{\sigma}x^{\sigma}+v^{\sigma},
\end{equation*}
whereupon $v^{\sigma}=Q_0^{\sigma} x^{\sigma}$.
Next,
\begin{eqnarray*}
B^{\sigma}x^{\sigma}&=& B^{\sigma}(P_0^{\sigma}x^{\sigma}+Q_0^{\sigma}x^{\sigma})=B^{\sigma}(P_0^{\sigma}x^{\sigma}+ v^{\sigma})\\ \\
&=& B^{\sigma}P_0^{\sigma}x^{\sigma}+B^{\sigma}v^{\sigma}= u^{\sigma}.
\end{eqnarray*}
Note that the equation \eqref{4.24} is restated by the equation \eqref{4.20}. Then, multiplying \eqref{4.20} with $B^{\sigma}P_0^{\sigma}$, we find
\begin{equation*}
B^{\sigma}P_0^{\sigma}\left(G_1^{-1}\right)^{\sigma}G_0^{\sigma}B^{-\sigma}(Bx)^{\Delta}= B^{\sigma}P_0^{\sigma}\left(G_1^{-1}\right)^{\sigma}C^{\sigma}x^{\sigma}+B^{\sigma} P_0^{\sigma}\left(G_1^{-1}\right)^{\sigma}f,
\end{equation*}
which we premultiply by $B^{-\sigma}$ and using that $B^{-\sigma}B^{\sigma}P_0^{\sigma}=P_0^{\sigma}$, we arrive at
\begin{equation*}
P_0^{\sigma}\left(G_1^{-1}\right)^{\sigma}G_0^{\sigma}B^{-\sigma}(Bx)^{\Delta}= P_0^{\sigma}\left(G_1^{-1}\right)^{\sigma}C^{\sigma}x^{\sigma}+ P_0^{\sigma}\left(G_1^{-1}\right)^{\sigma}f,
\end{equation*}
from where
\begin{equation}
\label{4.27}P_0^{\sigma}\left(G_1^{-1}\right)^{\sigma}A^{\sigma}(Bx)^{\Delta}= P_0^{\sigma}\left(G_1^{-1}\right)^{\sigma}C^{\sigma}x^{\sigma}+ P_0^{\sigma}\left(G_1^{-1}\right)^{\sigma}f.
\end{equation}
Next, we multiply \eqref{4.20} by $Q_0^{\sigma}$ and we find
\begin{equation}
\label{4.28}Q_0^{\sigma}\left(G_1^{-1}\right)^{\sigma}A^{\sigma}(Bx)^{\Delta}= Q_0^{\sigma}\left(G_1^{-1}\right)^{\sigma}C^{\sigma}x^{\sigma}+ Q_0^{\sigma}\left(G_1^{-1}\right)^{\sigma}f.
\end{equation}
Now, we add  \eqref{4.27} and \eqref{4.28} and we find
\begin{equation*}
\left(G_1^{-1}\right)^{\sigma}A^{\sigma}(Bx)^{\Delta}= \left(G_1^{-1}\right)^{\sigma}C^{\sigma}x^{\sigma}+ \left(G_1^{-1}\right)^{\sigma}f,
\end{equation*}
whereupon we get \eqref{1}.
\begin{definition}
The equation \eqref{4.24} is said to be the inherent equation for the equation \eqref{1}.
\end{definition}
\begin{theorem}\label{main}
The subspace $\text{im}\,P_0$ is an invariant subspace for the equation \eqref{4.24}, i.e.,
\begin{equation*}
u(t_0)\in (\text{im}\,BP_0)(t_0)
\end{equation*}
for some $t_0\in I$ if and only if
\begin{equation*}
u(t)\in (\text{im}\,BP_0)(t)
\end{equation*}
for any $t\in I$.
\end{theorem}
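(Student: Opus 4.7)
The plan is to introduce the matrix $\Pi = BP_0 B^{-}$ and show that it is a $\mathcal{C}^1$ projector whose image equals $\text{im}\,BP_0$. Using $B^{-}B = P_0$ and $P_0^2 = P_0$ one checks $\Pi^2 = BP_0B^{-}BP_0B^{-} = BP_0 P_0 B^{-} = BP_0B^{-} = \Pi$. For the images, $\text{im}\,\Pi \subseteq \text{im}\,BP_0$ is clear from the factorization $\Pi = (BP_0)B^{-}$, while the identity $\Pi \cdot BP_0 = BP_0 P_0 = BP_0$ gives the reverse inclusion. Thus $u \in \text{im}\,BP_0$ if and only if $(E-\Pi)u = 0$. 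Set $z = (E - \Pi)u$; the goal becomes the equivalence $z(t_0)=0 \Longleftrightarrow z \equiv 0$.

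Next I would compute $z^{\Delta}$. Using the $\Delta$-product rule on $\Pi u$,
\begin{equation*}
z^{\Delta} = u^{\Delta} - \Pi^{\Delta}u - \Pi^{\sigma}u^{\Delta} = (E-\Pi^{\sigma})u^{\Delta} - \Pi^{\Delta}u.
\end{equation*}
I substitute the inherent equation \eqref{4.24} into the right-hand side. The key cancellation is the shifted identity
\begin{equation*}
\Pi^{\sigma} B^{\sigma} P_0^{\sigma} = (BP_0B^{-} \cdot BP_0)^{\sigma} = (BP_0 P_0)^{\sigma} = B^{\sigma} P_0^{\sigma},
\end{equation*}
so $(E - \Pi^{\sigma})B^{\sigma}P_0^{\sigma} = 0$, which annihilates both the inhomogeneous term containing $f$ and the $C^{\sigma}B^{-\sigma}u^{\sigma}$ term. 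What remains is
\begin{equation*}
z^{\Delta} = (E-\Pi^{\sigma})\Pi^{\Delta}u - \Pi^{\Delta}u = -\Pi^{\sigma}\Pi^{\Delta}u.
\end{equation*}

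Now I use the time-scale version of the fact that a projector and its derivative satisfy $\Pi^{\sigma}\Pi^{\Delta}\Pi = 0$. Differentiating $\Pi^2 = \Pi$ via the product rule gives $\Pi^{\Delta} = \Pi^{\Delta}\Pi + \Pi^{\sigma}\Pi^{\Delta}$, whence $\Pi^{\sigma}\Pi^{\Delta} = \Pi^{\Delta}(E - \Pi)$; post-multiplying by $\Pi$ yields $\Pi^{\sigma}\Pi^{\Delta}\Pi = 0$. Writing $u = \Pi u + z$ we therefore get
\begin{equation*}
z^{\Delta} = -\Pi^{\sigma}\Pi^{\Delta}(\Pi u + z) = -\Pi^{\sigma}\Pi^{\Delta} z.
\end{equation*}
This is a homogeneous linear dynamic equation for $z$. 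Since $z \equiv 0$ is a solution with $z(t_0) = 0$, the existence-and-uniqueness theorem for linear systems on time scales stated in Section~2 gives $z(t) = 0$ for all $t \in I$ (forward propagation is automatic, and backward propagation uses the regressivity of the coefficient $-\Pi^{\sigma}\Pi^{\Delta}$). The opposite direction of the equivalence is immediate.

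The main obstacle I anticipate is getting the algebraic cancellation right, in particular the identity $\Pi^{\sigma}\Pi^{\Delta}\Pi = 0$ on time scales and the shifted product $\Pi^{\sigma}B^{\sigma}P_0^{\sigma} = B^{\sigma}P_0^{\sigma}$ that annihilates the forcing term; once these two facts are in hand the rest is routine. A secondary point worth flagging in the write-up is the regressivity needed for backward uniqueness, which is implicit under the admissibility assumptions used earlier but deserves a line of justification.
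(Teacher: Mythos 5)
Your proof follows essentially the same route as the paper's: the paper likewise multiplies the inherent equation \eqref{4.24} by $E-B^{\sigma}P_0^{\sigma}B^{-\sigma}$, uses $(E-\Pi^{\sigma})B^{\sigma}P_0^{\sigma}=0$ to annihilate the $C$ and $f$ terms, sets $v=(E-BP_0B^-)u$, derives a homogeneous linear dynamic equation for $v$ (written there as $v^{\Delta}=(E-BP_0B^-)^{\Delta}v$, which agrees with your $z^{\Delta}=-\Pi^{\sigma}\Pi^{\Delta}z$ after applying $\Pi^{\sigma}\Pi^{\Delta}=\Pi^{\Delta}(E-\Pi)$), and concludes from $v(t_0)=0$ by uniqueness. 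The regressivity caveat you flag for the backward direction is well taken --- the paper's proof invokes uniqueness for the resulting IVP without addressing it.
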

\begin{proof}
Let $u\in \mathcal{C}^1(I)$ be a solution to the equation \eqref{4.24} so that
\begin{equation*}
(BP_0)(t_0)u(t_0)=u(t_0).
\end{equation*}
Hence,
\begin{eqnarray*}
u(t_0)&=& (BP_0)(t_0)u(t_0)= (BP_0P_0 P_0)(t_0)u(t_0)= (BP_0B^-B P_0)(t_0)u(t_0)\\ \\
&=& (BP_0B^-)(t_0) (BP_0)(t_0) u(t_0)= (BP_0B^-)(t_0)u(t_0).
\end{eqnarray*}
We multiply the equation \eqref{4.24} by $E-B^{\sigma}P_0^{\sigma}B^{-\sigma}$ and we get
\begin{eqnarray*}
(E-B^{\sigma}P_0^{\sigma}B^{-\sigma})u^{\Delta}&=&(E-B^{\sigma}P_0^{\sigma}B^{-\sigma})(BP_0B^-)^{\Delta}u\\ \\
&&+(E-B^{\sigma}P_0^{\sigma}B^{-\sigma})B^{\sigma}P_0^{\sigma}G_1^{-1\sigma}C^{\sigma}B^{-\sigma}u^{\sigma}\\ \\
&&+(E-B^{\sigma}P_0^{\sigma}B^{-\sigma})B^{\sigma}P_0^{\sigma}\left(G_1^{-1}\right)^{\sigma}f\\ \\
&=& (E-B^{\sigma}P_0^{\sigma}B^{-\sigma})(BP_0B^-)^{\Delta}u\\ \\
&&+(B^{\sigma}P_0^{\sigma}-B^{\sigma}P_0^{\sigma}B^{-\sigma}B^{\sigma}P_0^{\sigma})G_1^{-1\sigma}C^{\sigma}B^{-\sigma}u^{\sigma}\\ \\
&&+(B^{\sigma}P_0^{\sigma}-B^{\sigma}P_0^{\sigma}B^{-\sigma}B^{\sigma}P_0^{\sigma})G_1^{-1\sigma}f\\ \\
&=&  (E-B^{\sigma}P_0^{\sigma}B^{-\sigma})(BP_0B^-)^{\Delta}u\\ \\
&&+(B^{\sigma}P_0^{\sigma}-B^{\sigma}P_0^{\sigma})G_1^{-1\sigma}C^{\sigma}B^{-\sigma}u^{\sigma}\\ \\
&&+(B^{\sigma}P_0^{\sigma}-B^{\sigma}P_0^{\sigma})G_1^{-1\sigma}f\\ \\
&=& (E-B^{\sigma}P_0^{\sigma}B^{-\sigma})(BP_0B^-)^{\Delta}u,
\end{eqnarray*}
i.e.,
\begin{equation}
\label{4.29} (E-B^{\sigma}P_0^{\sigma}B^{-\sigma})u^{\Delta}=(E-B^{\sigma}P_0^{\sigma}B^{-\sigma})(BP_0B^-)^{\Delta}u.
\end{equation}
Set
\begin{equation*}
v=(E- BP_0B^-)u.
\end{equation*}
Then
\begin{eqnarray*}
v^{\Delta}&=& (E-B^{\sigma}P_0^{\sigma}B^{-\sigma})u^{\Delta}+(E-BP_0B^-)^{\Delta}u\\ \\
&=& (E-B^{\sigma}P_0^{\sigma}B^{-\sigma})(BP_0B^-)^{\Delta}u+(E-BP_0B^-)^{\Delta}u\\ \\
&=& \left((E-BP_0B^-)BP_0B^-\right)^{\Delta}u\\ \\
&&-(E-BP_0B^-)^{\Delta}BP_0B^-u\\ \\
&&+ (E-BP_0B^-)^{\Delta}u\\ \\
&=& (BP_0B^--BP_0B^-BP_0B^-)^{\Delta} u\\ \\
&&+(E-BP_0B^-)^{\Delta}(E-BP_0B^-)u\\ \\
&=& (E-BP_0B^-)^{\Delta}v,
\end{eqnarray*}
i.e.,
\begin{equation*}
v^{\Delta}=(E-BP_0B^-)^{\Delta}v.
\end{equation*}
Note that
\begin{eqnarray*}
v(t_0)&=& u(t_0)-(BP_0B^-)(t_0)u(t_0)\\ \\
&=& u(t_0)-u(t_0)\\ \\
&=& 0.
\end{eqnarray*}
Thus, we get the following IVP
\begin{eqnarray*}
v^{\Delta}&=&(E-BP_0B^-)^{\Delta}v\quad \text{on}\quad I\\ \\
v(t_0)&=& 0.
\end{eqnarray*}
Therefore $v=0$ on $I$ and then
\begin{equation*}
BP_0B^-u=u\quad \text{on}\quad I.
\end{equation*}
Hence, using that
\begin{equation*}
\text{im}\,BP_0=\text{im}\,BP_0B^-,
\end{equation*}
we get
\begin{eqnarray*}
BP_0u&=& u\quad \text{on}\quad I.
\end{eqnarray*}
This completes the proof.
\end{proof}
\section{Examples}
\begin{example}
Let $\mathbb{T}=\mathbb{N}_0$
Consider \eqref{4.3} for
\begin{eqnarray*}
A(t)&=& \left(
          \begin{array}{ccc}
            -1 & t+1 & -1 \\
            0 & 0 & 0 \\
            0 & 2t+2 & -1 \\
          \end{array}
        \right),\quad 
C(t)= \left(
          \begin{array}{ccc}
            0 & 0 & 1 \\
            0 & -t & 1 \\
            0 & 2 & 1 \\
          \end{array}
        \right),\\ \\
P(t)&=& \left(
          \begin{array}{ccc}
            1 & 0 & 0 \\
            0 & 0 & 0 \\
            0 & -(t+1) & 1 \\
          \end{array}
        \right),\quad t\in \mathbb{T}.
\end{eqnarray*}
We have $\sigma(t)=t+1$, $t\in \mathbb{T}$, and
$P(t)P(t)=P(t)$ , $A(t) P(t)=A(t)$, $t\in \mathbb{T}$.
Then
\begin{eqnarray*}
Q(t)&=& I-P(t)= \left(
      \begin{array}{ccc}
        0 & 0 & 0 \\
        0 & 1 & 0 \\
        0 & t+1 & 0 \\
      \end{array}
    \right)\\ \\
A_1(t)&=& A(t)+C(t)Q(t)= \left(
      \begin{array}{ccc}
        -1 & 2(t+1) & -1 \\
        0 & 1 & 0 \\
        0 & 3t+5 & -1 \\
      \end{array}
    \right),\\ \\
A_1^{-1}(t)&=&  \left(
      \begin{array}{ccc}
        -1 & -(t+3) &1 \\
        0 & 1 & 0 \\
        0 & 3t+5 & -1 \\
      \end{array}
    \right),\quad t\in \mathbb{T}.
\end{eqnarray*}
Hence,
\begin{eqnarray*}
    \left(A_1^{-1}\right)^{\sigma}(t)&=& \left(
      \begin{array}{ccc}
        -1 & -(t+4) & 1 \\
        0 &  1 & 0 \\
        0 & 3t+8 & -1 \\
      \end{array}
    \right),\\ \\ 
C^{\sigma}(t)&=& \left(
      \begin{array}{ccc}
        0 & 0 & 1 \\
        0 & -t-1 & 1 \\
        0 & 2 & 1 \\
      \end{array}
    \right),\quad 
P^{\sigma}(t)= \left(
      \begin{array}{ccc}
        1 & 0 & 0 \\
        0 & 0 & 0 \\
        0 & -(t+2) & 1 \\
      \end{array}
    \right),\\ \\ 
Q^{\sigma}(t)&=& \left(
      \begin{array}{ccc}
        0 & 0 & 0 \\
        0 & 1 & 0 \\
        0 & t+2 & 0 \\
      \end{array}
    \right),\quad 
P^{\Delta}(t)= \left(
      \begin{array}{ccc}
        0 & 0 & 0 \\
        0 & 0 & 0 \\
        0 & -1 & 0 \\
      \end{array}
    \right),\quad t\in \mathbb{T},
\end{eqnarray*}
and
\begin{eqnarray*}
P^{\sigma}(t)\left(A_1^{-1}\right)^{\sigma}(t)
&=& \left(
             \begin{array}{ccc}
               -1 &-(t+4) & 1 \\
               0 &  0 & 0 \\
               0 & 2(t+3) & -1 \\
             \end{array}
           \right),\\ \\
P^{\sigma}(t)\left(A_1^{-1}\right)^{\sigma}(t)C^{\sigma}(t)
&=& \left(
      \begin{array}{ccc}
        0 & (t+2)(t+3) & -(t+4) \\
        0 &  0 &  0 \\
        0 & -2(t+2)^2 & 2t+5 \\
      \end{array}
    \right),\\ \\
Q^{\sigma}(t)\left(A_1^{-1}\right)^{\sigma}(t)
&=& \left(
      \begin{array}{ccc}
        0 & 0 & 0 \\
        0 & 1 & 0 \\
        0 & t+2 & 0 \\
      \end{array}
    \right),\\ \\
Q^{\sigma}(t) \left(A_1^{-1}\right)^{\sigma}(t)C^{\sigma}(t)
&=& \left(
      \begin{array}{ccc}
        0 & 0 & 0 \\
        0 & -t-1 & 1 \\
        0 & -(t+1)(t+2) & t+2 \\
      \end{array}
    \right),\quad t\in \mathbb{T}.
\end{eqnarray*}
Then the system \eqref{4.10} takes the form
\begin{eqnarray*}
u^{\Delta}(t)&=& \left(
             \begin{array}{ccc}
               0 & 0 & 0 \\
               0 & 0 & 0 \\
               0 & -1 & 0 \\
             \end{array}
           \right)u(t)+\left(
                            \begin{array}{ccc}
                              0 & (t+2)(t+3) & -(t+4) \\
                              0 & 0 & 0 \\
                              0 & -2(t+2)^2 & 2t+5 \\
                            \end{array}
                          \right)u^{\sigma}(t)+ \left(
      \begin{array}{ccc}
        -1 & -(t+4) & -1 \\
        0 &  0 &  0 \\
        0 & 2(t+3) & -1 \\
      \end{array}
    \right)f(t)\\ \\
v^{\sigma}(t)&=& -\left(
              \begin{array}{ccc}
                0 & 0 & 0 \\
                0 & -t-1 & 1 \\
                0 & -(t+1)(t+2) & t+2 \\
              \end{array}
            \right)u^{\sigma}(t)-\left(
                             \begin{array}{ccc}
                               0 & 0 & 0 \\
                               0 & 1 & 0\\
                               0 & t+2 & 0 \\
                             \end{array}
                           \right)f(t),\quad t\in \mathbb{T}.
\end{eqnarray*}
\end{example}
\begin{example}
Let $\mathbb{T}=2^{\mathbb{N}_0}$. Consider \eqref{1} for 
\begin{eqnarray*}
A(t)&=& \left(
          \begin{array}{ccc}
            t & 0 & 0 \\
            0 & 1 & 0 \\
            0 & 0 & t^2 \\
            0 & 0 & 0 \\
            0 & 0 & 0 \\
          \end{array}
        \right),\quad 
B(t)= \left(
        \begin{array}{ccccc}
          t & 0 & 0 & 0 & 0 \\
          0 & t^2 & 0 & 0 & 0 \\
          0 & 0 & 1 & 0 & 0 \\
        \end{array}
      \right),\\ \\
C(t)&=& \left(
          \begin{array}{ccccc}
            0 & 0 & 0 & -1 & 1 \\
            0 & 0 & t & 1 & 0 \\
            0 & -1 & 0 & 0 & 0 \\
            -1 & 1 & 0 & -t^2 & 0 \\
            1 & 0 & 0 & 0 & t^2 \\
          \end{array}
        \right),\quad t\in \mathbb{T}.
\end{eqnarray*}
Here $\sigma(t)=2t$, $t\in \mathbb{T}$, and
\begin{eqnarray*}
R(t)&=& \left(
      \begin{array}{ccc}
        1 & 0 & 0 \\
        0 & 1 & 0 \\
        0 & 0 & 1 \\
      \end{array}
    \right), \quad G_0(t)= A(t) B(t)
= \left(
                 \begin{array}{ccccc}
                   t^2 & 0 & 0 & 0 & 0 \\
                   0 & t^2 & 0 & 0 & 0 \\
                   0 & 0 & t^2 & 0 & 0 \\
                   0 & 0 & 0 & 0 & 0 \\
                   0 & 0 & 0 & 0 & 0 \\
                 \end{array}
               \right),\\ \\
Q_0(t)&=& \left(
         \begin{array}{ccccc}
           0 & 0 & 0 & 0 & 0 \\
           0 & 0 & 0 & 0 & 0 \\
           0 & 0 & 0 & 0 & 0 \\
           0 & 0 & 0 & 1 & 0 \\
           0 & 0 & 0 & 0 & 1 \\
         \end{array}
       \right),\quad P_0(t)= \left(
         \begin{array}{ccccc}
           1 & 0 & 0 & 0 & 0 \\
           0 & 1 & 0 & 0 & 0 \\
           0 & 0 & 1 & 0 & 0 \\
           0 & 0 & 0 & 0 & 0 \\
           0 & 0 & 0 & 0 & 0 \\
         \end{array}
       \right),\quad t\in \mathbb{T}.
\end{eqnarray*}
Hence,
\begin{eqnarray*}
G_1(t)&=& G_0(t)+C(t)Q_0(t)
= \left(
      \begin{array}{ccccc}
        t^2 & 0 & 0 & -1 & 1 \\
        0 & t^2 & 0 & 1 & 0 \\
        0 & 0 & t^2 & 0 & 0 \\
        0 & 0 & 0 & -t^2 & 0 \\
        0 & 0 & 0 & 0 & t^2 \\
      \end{array}
    \right),\\ \\
B^-(t)&=& \left(
         \begin{array}{ccc}
           \frac{1}{t} & 0 & 0 \\
           0 & \frac{1}{t^2} & 0 \\
           0 & 0 & 1 \\
           0 & 0 & 0 \\
           0 & 0 & 0 \\
         \end{array}
       \right),\quad t\in \mathbb{T}.
\end{eqnarray*}
Next,
\begin{eqnarray*}
B^{-\sigma}(t)&=& \left(
                    \begin{array}{ccc}
                      \frac{1}{2t} & 0 & 0 \\
                      0 & \frac{1}{4t^2} & 0 \\
                      0 & 0 & 1 \\
                      0 & 0 & 0 \\
                      0 & 0 & 0 \\
                    \end{array}
                  \right),\quad 
B^{\sigma}(t)= \left(
                   \begin{array}{ccccc}
                     2t & 0 & 0 & 0 & 0 \\
                     0 & 4t^2 & 0 & 0 & 0 \\
                     0 & 0 & 1 & 0 & 0 \\
                   \end{array}
                 \right),\\ \\
C^{\sigma}(t)&=& \left(
                   \begin{array}{ccccc}
                     0 & 0 & 0 & -1 & 1 \\
                     0 & 0 & 2t & 1 & 0 \\
                     0 & -1 & 0 & 0 & 0 \\
                     -1 & 1 & 0 & -4t^2 & 0 \\
                     1 & 0 & 0 & 0 & 4t^2 \\
                   \end{array}
                 \right),\quad 
Q^{\sigma}_0(t)= \left(
                     \begin{array}{ccccc}
                       0 & 0 & 0 & 0 & 0 \\
                       0 & 0 & 0 & 0 & 0 \\
                       0 & 0 & 0 & 0 & 0 \\
                       0 & 0 & 0 & 1 & 0 \\
                       0 & 0 & 0 & 0 & 1 \\
                     \end{array}
                   \right),\\ \\
P_0^{\sigma}(t)&=& \left(
                     \begin{array}{ccccc}
                       1 & 0 & 0 & 0 & 0 \\
                       0 & 1 & 0 & 0 & 0 \\
                       0 & 0 & 1 & 0 & 0 \\
                       0 & 0 & 0 & 0 & 0 \\
                       0 & 0 & 0 & 0 & 0 \\
                     \end{array}
                   \right),\quad t\in \mathbb{T}.
\end{eqnarray*}
Note that
\begin{eqnarray*}
\det G_1(t)&=& -t^{10}\ne 0,\quad t\in \mathbb{T}, 
\end{eqnarray*}
and
\begin{eqnarray*}
G_1^{-1}(t)&=&  \left(
      \begin{array}{ccccc}
        \frac{1}{t^2} & 0 & 0 & -\frac{1}{t^4} & -\frac{1}{t^4} \\
        0 & \frac{1}{t^2} & 0 & \frac{1}{t^4} & 0 \\
        0 & 0 & \frac{1}{t^2} & 0 & 0 \\
        0 & 0 & 0 & -\frac{1}{t^2} & 0 \\
        0 & 0 & 0 & 0 & \frac{1}{t^2} \\
      \end{array}
    \right),\\ \\
\left(G^{-1}\right)^{\sigma}(t)&=&\left(
      \begin{array}{ccccc}
        \frac{1}{4t^2} & 0 & 0 & -\frac{1}{16t^4} & -\frac{1}{16t^4} \\
        0 & \frac{1}{4t^2} & 0 & \frac{1}{16t^4} & 0 \\
        0 & 0 & \frac{1}{4t^2} & 0 & 0 \\
        0 & 0 & 0 & -\frac{1}{4t^2} & 0 \\
        0 & 0 & 0 & 0 & \frac{1}{4t^2} \\
      \end{array}
    \right),\quad t\in \mathbb{T}.
\end{eqnarray*}
Therefore
\begin{eqnarray*}
B(t) P_0(t) B^-(t)
&=&\left(
     \begin{array}{ccc}
       1 & 0 & 0 \\
       0 & 1 & 0 \\
       0 & 0 & 1 \\
     \end{array}
   \right),\quad
(BP_0B^-)^{\Delta}(t)=0,\\ \\
B^{\sigma}(t) P_0^{\sigma}(t) \left(G_1^{-1}\right)^{\sigma}(t)&=& \left(
      \begin{array}{ccccc}
        \frac{1}{2t} & 0 & 0 & -\frac{1}{8t^3} & -\frac{1}{8t^3} \\
        0 & 1 & 0 & \frac{1}{4t^2} & 0 \\
        0 & 0 & \frac{1}{4t^2} & 0 & 0 \\
      \end{array}
    \right),
\end{eqnarray*}
\begin{eqnarray*}
    B^{\sigma}(t) P_0^{\sigma}(t) \left(G_1^{-1}\right)^{\sigma}(t)C^{\sigma}(t)B^{-\sigma}(t)
    &=& \left(
          \begin{array}{ccc}
            0 & -\frac{1}{32t^5} & 0 \\
            -\frac{1}{8t^3} & \frac{1}{16t^4} & 2t \\
            0 & -\frac{1}{16t^4} & 0 \\
          \end{array}
        \right),\\ \\
Q_0^{\sigma}(t) \left(G_1^{-1}\right)^{\sigma}(t)&=& \left(
      \begin{array}{ccccc}
        0 & 0 & 0 & 0 & 0 \\
        0 & 0 & 0 & 0 & 0 \\
        0 & 0 & 0 & 0 & 0 \\
        0 & 0 & 0 & -\frac{1}{4t^2} & 0 \\
        0 & 0 & 0 & 0 & \frac{1}{4t^2} \\
      \end{array}
    \right),\\ \\
Q_0^{\sigma}(t) \left(G_1^{-1}\right)^{\sigma}(t) C^{\sigma}(t)B^{-\sigma}(t)
&=& \left(
      \begin{array}{ccc}
        0 & 0 & 0 \\
        0 & 0 & 0 \\
        0 & 0 & 0 \\
        \frac{1}{8t^3} & -\frac{1}{16t^4} & 0 \\
        \frac{1}{8t^3} & 0 & 0 \\
      \end{array}
    \right),\quad t\in \mathbb{T}.
\end{eqnarray*}
Then, the decoupling \eqref{4.24} takes the form
\begin{eqnarray*}
u^{\Delta}(t)&=& \left(
             \begin{array}{ccc}
               0 & -\frac{1}{32t^5} & 0 \\
               -\frac{1}{8t^3} & \frac{1}{16t^4} & 2t \\
               0 & -\frac{1}{16t^4} & 0 \\
             \end{array}
           \right)u^{\sigma}(t)+\left(
      \begin{array}{ccccc}
        \frac{1}{2t} & 0 & 0 & -\frac{1}{8t^3} & -\frac{1}{8t^3} \\
        0 & 1 & 0 & \frac{1}{4t^2} & 0 \\
        0 & 0 & \frac{1}{4t^2} & 0 & 0 \\
      \end{array}
    \right)f(t)\\ \\
v^{\sigma}(t)&=& -\left(
      \begin{array}{ccc}
        0 & 0 & 0 \\
        0 & 0 & 0 \\
        0 & 0 & 0 \\
        \frac{1}{8t^3} & -\frac{1}{16t^4} & 0 \\
        \frac{1}{8t^3} & 0 & 0 \\
      \end{array}
    \right)u^{\sigma}(t)-\left(
                             \begin{array}{ccccc}
                               0 & 0 & 0 & 0 & 0 \\
                               0 & 0 & 0 & 0 & 0 \\
                               0 & 0 & 0 & 0 & 0 \\
                               0 & 0 & 0 & -\frac{1}{4t^2} & 0 \\
                               0 & 0 & 0 & 0 & \frac{1}{4t^2} \\
                             \end{array}
                           \right)f(t),\quad t\in \mathbb{T}.
\end{eqnarray*}
\end{example}

\section{Conclusion} The problem of decoupling is very significant in the theory of dynamical systems of any origin. First of all, it simplifies the process of finding the analytic solution and qualitative studies. For instance, it is widely applied in the Stability Theory (e.g., for hyperbolic, partially hyperbolic, non-uniformly hyperbolic, or regular systems).

The case of an arbitrary time scale is more sophisticated than that of classical ordinary differential equations for the following reasons:
\begin{enumerate}
\item time scale calculus involves more sophisticated formulas than the classical ones and the classical theory cannot be immediately translated to the language of time scales;
\item the theory of time scale systems is much less developed than that of ordinary differential equations;
\item autonomous systems are much more difficult to study for generic time scales.
\end{enumerate}

The main result of the paper is Theorem \ref{main}. Our proofs are constructive.

Although the approach of this paper is quite straightforward, special techniques, based projector approach were elaborated. The obtained results are illustrated with  specific examples.

\section*{Acknowledgements}
The work of the second co-author was supported by Gda\'{n}sk University of Technology by the DEC 14/2021/IDUB/I.1 grant under the Nobelium - 'Excellence Initiative - Research University' program.

\end{document}